\documentclass{amsproc}
\usepackage{amsmath}
\usepackage{enumerate}
\usepackage{amsmath,amsthm,amscd,amssymb}

\usepackage{latexsym}
\usepackage{upref}
\usepackage{verbatim}

\usepackage[mathscr]{eucal}
\usepackage{dsfont}

\usepackage{graphicx}
\usepackage[colorlinks,hyperindex,pagebackref,hypertex]{hyperref}

\newtheorem{theorem}{Theorem}

\newtheorem{definition}[theorem]{Definition}


\chardef\bslash=`\\ 

\hfuzz1pc 

\newcommand{\dA}{{\dot A}}

\newcommand{\ti}{\tilde  }

\newcommand{\dom}{\text{\rm{Dom}}}

\newcommand{\calH}{{\mathcal H}}

\newcommand{\calR}{{\mathcal R}}

\renewcommand{\Im}{\text{\rm Im}}


\def\bA{{\mathbb A}}      \def\dC{{\mathbb C}}

      \def\dR{{\mathbb R}}

   \def\cH{{\mathcal H}}

\def\RE{{\rm Re\,}}

\def\uphar{{\upharpoonright\,}}

\DeclareMathOperator{\IM}{Im}

\DeclareMathOperator{\Ext}{Ext}

\newcommand{\eval}[2][\right]{\relax
  \ifx#1\right\relax \left.\fi#2#1\rvert}

\begin{document}

\title{On Sectorial  L-systems with Shr\"odinger  operator}

\author{S. Belyi}
\address{Department of Mathematics\\ Troy State University\\
Troy, AL 36082, USA\\
}
\curraddr{}
\email{sbelyi@troy.edu}

\author{E. Tsekanovski\u i}
\address{Department of Mathematics,\\ Niagara University, New York
14109\\ USA}
\email{tsekanov@niagara.edu}

\subjclass{Primary 47A10; Secondary 47N50, 81Q10}
\date{DD/MM/2004}

\dedicatory{In respectful memory of Selim Grigor'evich Krein }

\keywords{L-system, transfer function, impedance function,  Herglotz-Nevanlinna function, inverse Stieltjes function, sectorial operators, Shr\"odinger  operator}

\begin{abstract}
We study L-systems with sectorial main operator and connections of their impedance functions  with sectorial Stieltjes and inverse Stieltjes functions.  Conditions when the main and state space operators (the main and associated state space operators) of a given L-system have the same or not angle of sectoriality are presented in terms of their impedance functions with discussion provided. Detailed analysis of L-systems with one-dimensional sectorial Shro\"odinger  operator on half-line is given as well as connections with the Kato problem on sectorial extensions of sectorial forms.
Examples that illustrate the obtained results are presented.

 \end{abstract}

\maketitle

\section{Introduction}\label{s1}


In the current paper we focus on sectorial L-systems and, in particular, on L-systems with Shro\"odinger operators whose impedance functions are sectorial Stieltjes or sectorial inverse Stieltjes functions. The formal definition, exposition and discussions of \textit{sectorial} classes $S^\alpha$ and $S^{\alpha_1,\alpha_2}$ of  Stieltjes functions and \textit{sectorial} classes $S^{-1,\alpha}$ and $S^{-1,\alpha_1,\alpha_2}$  of inverse Stieltjes functions are presented in Sections \ref{s2},  \ref{s3} and \ref{s4} (see \cite{AlTs1},  \cite{B2011}, and \cite{BT14}).
Theorems  for these sectorial classes allow us to observe the geometric properties of the corresponding L-systems. Moreover, the knowledge of the limit values at zero and infinity of the impedance function allows to find angle of sectoriality of the main, state space or associated state space operators of a given L-system that leads to the connection of Kato's problem about sectorial extension of sectorial forms.
 Section \ref{s5} is devoted to L-systems with Schr\"odinger operator in $L_2[a,+\infty)$  and non-self-adjoint boundary conditions.
A complete description of such L-systems as well as the formulas for their transfer and impedance functions are presented.
 Section \ref{s6} contains the main results of the present paper. Utilizing theorems covered in Section \ref{s4}, we obtain some new properties of L-systems with Schr\"odinger operator whose impedance function falls into a particular class. Most of the results are given in terms of the real  parameter $\mu$ that appears in the construction of L-system. Finally, it is worth to mention that a Stieltjes function can be a coefficient of dynamic pliability of a string (this was established by M.~Krein, see \cite{KK74}), and that maximal sectorial operators have profound connections with holomorphic contraction semigroups \cite{Gold85}, \cite{Kr70}.
 The present paper is a further development of the theory of open physical systems conceived by M.~Liv\u sic in \cite{Lv2}.

This paper is written on the occasion of the centenary of Selim Grigor'evich Krein, a remarkable human being and great mathematician. A couple of generations of mathematicians (including the authors) from the former USSR are in debt to him for an opportunity to learn a lot as participants of the famous Voronezh Winter Mathematical School created and conducted by S.G. for many years despite certain singularities of life at that time. His apostolic devotion to mathematics and mathematical community, his help and support for those who needed it will always be in our hearts.

\section{Preliminaries}\label{s2}

For a pair of Hilbert spaces $\calH_1$, $\calH_2$ we denote by
$[\calH_1,\calH_2]$ the set of all bounded linear operators from
$\calH_1$ to $\calH_2$. Let $\dA$ be a closed, densely defined,
symmetric operator in a Hilbert space $\calH$ with inner product
$(f,g),f,g\in\calH$. Any non-symmetric operator $T$ in $\cH$ such that
\[
\dA\subset T\subset\dA^*
\]
is called a \textit{quasi-self-adjoint extension} of $\dA$.

 Consider the rigged Hilbert space (see \cite{Ber}, \cite{ABT})
$\calH_+\subset\calH\subset\calH_- ,$ where $\calH_+ =\dom(\dA^*)$ and
\begin{equation}\label{108}
(f,g)_+ =(f,g)+(\dA^* f, \dA^*g),\;\;f,g \in \dom(A^*).
\end{equation}
Let $\calR$ be the \textit{\textrm{Riesz-Berezansky   operator}} $\calR$ (see \cite{Ber}, \cite{ABT}) which maps $\mathcal H_-$ onto $\mathcal H_+$ such
 that   $(f,g)=(f,\calR g)_+$ ($\forall f\in\calH_+$, $g\in\calH_-$) and
 $\|\calR g\|_+=\| g\|_-$.
 Note that
identifying the space conjugate to $\calH_\pm$ with $\calH_\mp$, we
get that if $\bA\in[\calH_+,\calH_-]$, then
$\bA^*\in[\calH_+,\calH_-].$
An operator $\bA\in[\calH_+,\calH_-]$ is called a \textit{self-adjoint
bi-extension} of a symmetric operator $\dA$ if $\bA=\bA^*$ and $\bA
\supset \dA$.
Let $\bA$ be a self-adjoint
bi-extension of $\dA$ and let the operator $\hat A$ in $\cH$ be defined as follows:
\[
\dom(\hat A)=\{f\in\cH_+:\bA f\in\cH\}, \quad \hat A=\bA\uphar\dom(\hat A).
\]
The operator $\hat A$ is called a \textit{quasi-kernel} of a self-adjoint bi-extension $\bA$ (see \cite{TSh1}, \cite[Section 2.1]{ABT}).
 A self-adjoint bi-extension $\bA$ of a symmetric operator $\dA$ is called \textit{t-self-adjoint} (see \cite[Definition 4.3.1]{ABT}) if its quasi-kernel $\hat A$ is self-adjoint operator in $\calH$.
An operator $\bA\in[\calH_+,\calH_-]$  is called a \textit{quasi-self-adjoint bi-extension} of an operator $T$ if $\bA\supset T\supset \dA$ and $\bA^*\supset T^*\supset\dA.$  We will be mostly interested in the following type of quasi-self-adjoint bi-extensions.
Let $T$ be a quasi-self-adjoint extension of $\dA$ with nonempty resolvent set $\rho(T)$. A quasi-self-adjoint bi-extension $\bA$ of an operator $T$ is called (see \cite[Definition 3.3.5]{ABT}) a \textit{($*$)-extension } of $T$ if $\RE\bA$ is a
t-self-adjoint bi-extension of $\dA$.
In what follows we assume that $\dA$ has deficiency indices $(1,1)$. In this case it is known \cite{ABT} that every  quasi-self-adjoint extension $T$ of $\dA$  admits $(*)$-extensions.
The description of all $(*)$-extensions via Riesz-Berezansky   operator $\calR$ can be found in \cite[Section 4.3]{ABT}.

Recall that a linear operator $T$ in a Hilbert space $\calH$ is called \textbf{accretive} \cite{Ka} if $\RE(Tf,f)\ge 0$ for all $f\in \dom(T)$.  We call an accretive operator $T$
\textbf{$\alpha$-sectorial} \cite{Ka} if there exists a value of $\alpha\in(0,\pi/2)$ such that
\begin{equation}\label{e8-29}
    (\cot\alpha) |\IM(Tf,f)|\le\,\RE(Tf,f),\qquad f\in\dom(T).
\end{equation}
We say that the angle of sectoriality $\alpha$ is \textbf{exact} for an $\alpha$-sectorial
operator $T$ if $$\tan\alpha=\sup_{f\in\dom(T)}\frac{|\IM(Tf,f)|}{\RE(Tf,f)}.$$
 A $(*)$-extension $\bA$ of $T$ is called \textbf{accretive} if $\RE(\bA f,f)\ge 0$ for all $f\in\cH_+$. This is
equivalent to that the real part $\RE\bA=(\bA+\bA^*)/2$ is a nonnegative self-adjoint bi-extension of $\dA$.
A ($*$)-extensions $\bA$ of an operator $T$  is called \textbf{accumulative} if
\begin{equation}\label{e7-3-3}
(\RE\bA f,f)\le (\dA^\ast f,f)+(f,\dA^\ast f),\quad f\in\calH_+.
\end{equation}

The following definition is a ``lite" version of the definition of L-system given for a scattering L-system with
 one-dimensional input-output space. It is tailored for the case when the symmetric operator of an L-system has deficiency indices $(1,1)$. The general definition of an L-system can be found in \cite[Definition 6.3.4]{ABT}.
\begin{definition} 
 An
array
\begin{equation}\label{e6-3-2}
\Theta= \begin{pmatrix} \bA&K&\ 1\cr \calH_+ \subset \calH \subset
\calH_-& &\dC\cr \end{pmatrix}
\end{equation}
 is called an \textbf{{L-system}}   if:
\begin{enumerate}
\item[(1)] {$T$ is a dissipative quasi-self-adjoint extension of a symmetric operator $\dA$ with deficiency indices $(1,1)$};
\item[(2)] {$\mathbb  A$ is a   ($\ast $)-extension of  $T$};
\item[(3)] $\IM\bA= KK^*$, where $K\in [\dC,\calH_-]$ and $K^*\in [\calH_+,\dC]$.
\end{enumerate}
\end{definition}
  Operators $T$ and $\bA$ are called a \textit{main and state-space operators respectively} of the system $\Theta$, and $K$ is  a \textit{channel operator}.
It is easy to see that the operator $\bA$ of the system  \eqref{e6-3-2}  can be chosen such that $\IM\bA=(\cdot,\chi)\chi$, $\chi\in\calH_-$ and $K c=c\cdot\chi$, $c\in\dC$.
  A system $\Theta$ in \eqref{e6-3-2} is called \textit{minimal} if the operator $\dA$ is a prime operator in $\calH$, i.e., there exists no non-trivial reducing invariant subspace of $\calH$ on which it induces a self-adjoint operator. Minimal L-systems of the form \eqref{e6-3-2} with  one-dimensional input-output space were also considered in \cite{BMkT}.

We  associate with an L-system $\Theta$ the  function
\begin{equation}\label{e6-3-3}
W_\Theta (z)=I-2iK^\ast (\mathbb  A-zI)^{-1}K,\quad z\in \rho (T),
\end{equation}
which is called the \textbf{transfer  function} of the L-system $\Theta$. We also consider the  function
\begin{equation}\label{e6-3-5}
V_\Theta (z) = K^\ast (\RE\bA - zI)^{-1} K,
\end{equation}
that is called the
\textbf{impedance function} of an L-system $ \Theta $ of the form (\ref{e6-3-2}).  The transfer function $W_\Theta (z)$ of the L-system $\Theta $ and function $V_\Theta (z)$ of the form (\ref{e6-3-5}) are connected by the following relations valid for $\IM z\ne0$, $z\in\rho(T)$,
\begin{equation*}\label{e6-3-6}
\begin{aligned}
V_\Theta (z) &= i [W_\Theta (z) + I]^{-1} [W_\Theta (z) - I],\\
W_\Theta(z)&=(I+iV_\Theta(z))^{-1}(I-iV_\Theta(z)).
\end{aligned}
\end{equation*}
 The class of all Herglotz-Nevanlinna functions, that can be realized as impedance functions of L-systems, and connections with Weyl-Titchmarsh functions can be found in \cite{ABT}, \cite{BMkT}, \cite{DMTs},  \cite{GT} and references therein.

An L-system $\Theta $ of the form \eqref{e6-3-2} is called an \textbf{accretive system} (\cite{BT10}, \cite{DoTs}) if its state-space operator $\bA$ is accretive and \textbf{accumulative} (\cite{BT11}) if its state-space operator $\bA$ is accumulative, i.e., satisfies \eqref{e7-3-3}. It is easy to see that if an L-system is accumulative,
then \eqref{e7-3-3} implies that the operator $\dA$ of the system is non-negative and both operators $T$ and $T^*$ are accretive. We also associate another operator $\ti\bA$ to an accumulative L-system $\Theta$. It is given by
\begin{equation}\label{e-14}
    \ti\bA=2\,\RE\dA^*-\bA,
\end{equation}
where $\dA^*$ is in $[\calH_+,\calH_-]$. Obviously, $\RE\dA^*\in[\calH_+,\calH_-]$ and $\ti\bA\in[\calH_+,\calH_-]$. Clearly, $\ti\bA$ is a bi-extension of $\dA$ and is accretive if and only if $\bA$ is accumulative. It is also not hard to see that even though $\ti\bA$ is not a ($*$)-extensions  of the operator $T$ but the form $(\ti\bA f,f)$, $f\in\calH_+$ extends the form $(f,T f)$, $f\in\dom(T)$. An accretive  L-system  is called  \textbf{sectorial} if its state-space operator $\bA$ is sectorial, i.e., satisfies \eqref{e8-29} for some $\alpha\in(0,\pi/2)$. Similarly, an accumulative L-system  is   \textbf{sectorial} if its  operator $\ti\bA$ of the form \eqref{e-14} is sectorial.

\section{Realization of Stieltjes and inverse Stieltjes functions}\label{s3}

A scalar function $V(z)$ is called the Herglotz-Nevanlinna function if it is holomorphic on ${\dC \setminus \dR}$, symmetric with respect to the real axis, i.e., $V(z)^*=V(\bar{z})$, $z\in {\dC \setminus \dR}$, and if it satisfies the positivity condition $\IM V(z)\geq 0$,  $z\in \dC_+$.
The following definition  can be found in \cite{KK74}.
A scalar Herglotz-Nevanlinna function $V(z)$ is a \textit{Stieltjes function} if it is holomorphic in $\Ext[0,+\infty)$ and
\begin{equation}\label{e4-0}
\frac{\IM[zV(z)]}{\IM z}\ge0.
\end{equation}
It is known \cite{KK74} that a Stieltjes function  $V(z)$  admits the following integral representation
\begin{equation}\label{e8-94}
V(z) =\gamma+\int\limits_0^\infty\frac {dG(t)}{t-z},
\end{equation}
where $\gamma\ge0$ and $G(t)$ is a non-decreasing on $[0,+\infty)$  function such that
$\int^\infty_0\frac{dG(t)}{1+t}<\infty.$
\noindent
 We are going to focus on the class $S_0(R)$  (see \cite{BT10}, \cite{DoTs}, \cite{ABT}), whose definition is the following.
A scalar Stieltjes function $V(z)$ is said to be a member of the \textbf{class $S_0(R)$}\index{Class!$S_0(R)$} if the measure $G(t)$ in  representation \eqref{e8-94} is unbounded.
It was shown in \cite{ABT} (see also \cite{BT10}) that such a function $V(z)$ can be realized as the impedance function of an accretive  L-system $\Theta$ of the form \eqref{e6-3-2} with a densely defined symmetric operator if and only if it belongs to the class $S_0(R)$.

Now we turn to inverse Stieltjes functions.
A scalar Herglotz-Nevanlinna function $V(z)$ is called \textbf{inverse Stieltjes} if $V(z)$ it is holomorphic in $\Ext[0,+\infty)$ and
\begin{equation}\label{e9-187}
\frac{\Im[V(z)/z]}{\IM\, z}\ge0.
\end{equation}
It can be shown (see \cite{KK74}) that every inverse Stieltjes function $V(z)$  admits the following integral representation
\begin{equation}\label{e4-1-8}
V(z) =\gamma+z\beta+\int_{0}^\infty\left( \frac 1{t-z}- \frac{1}{t}\right)\,dG(t),
\end{equation}
where $\gamma\le0$, $\beta\ge0$, and $G(t)$ is a non-decreasing on $[0,+\infty)$  function such that
$\int^\infty_{0}\frac{dG(t)}{t+t^2}<\infty.$
The following definition  provides the description of a realizable subclass of inverse Stieltjes  functions.
A scalar inverse Stieltjes function $V(z)$ is  a member of the \textbf{class ${S^{-1}_0}(R)$} if  the measure $G(t)$ in  representation \eqref{e4-1-8} is unbounded and $\beta=0$.
It was shown in \cite{ABT} that a function $V(z)$ belongs to the class $S_0^{-1}(R)$ if and only if it can be realized
as impedance function of an accumulative  L-system $\Theta$ of the form \eqref{e6-3-2} with  a non-negative densely defined symmetric operator $\dA$.

\section{Sectorial classes  and  and their realizations}\label{s4}

In this section we are going to introduce sectorial subclasses of scalar Stieltjes and inverse Stieltjes functions. Let $\alpha\in(0,\frac{\pi}{2})$. First, we introduce \textbf{sectorial subclasses $S^{\alpha}$ } of Stieltjes functions as follows.  A scalar Stieltjes function $V(z)$ belongs to $S^{\alpha}$ if
\begin{equation}\label{e9-180}
K_{\alpha}= \sum_{k,l=1}^n\left[\frac{z_k V(z_k)-\bar z_l V(\bar z_l)}{z_k-\bar z_l}-{{(\cot\alpha)}~}V(\bar z_l)V(z_k)\right]h_k\bar h_l\ge0,
\end{equation}
for an arbitrary sequences of complex numbers $\{z_k\}$, ($\IM z_k>0$) and $\{h_k\}$, ($k=1,...,n$).
 For $0<\alpha_1< \alpha_2 <\frac{\pi}{2}$, we have
\begin{equation*}
S^{ \alpha_1}\subset S^{ \alpha_2}\subset{S},
\end{equation*}
where $S$ denotes the class of all Stieltjes functions (which corresponds to the case $\alpha=\frac{\pi}{2}$).
Let $\Theta$ be a minimal L-system of the form \eqref{e6-3-2} with a densely defined non-negative symmetric operator $\dA$. Then (see \cite{ABT}) the impedance function $V_\Theta(z)$ defined by \eqref{e6-3-5} belongs to the class $S^{\alpha}$ if and only if the operator $\bA$ of the L-system $\Theta$ is $\alpha$-sectorial.

Let
$0\le \alpha_1\le\alpha_2\le \frac{\pi}{2}.$
We say that a scalar Stieltjes function $V(z)$ belongs to the \textbf{class $S^{\,\alpha_1,\alpha_2}$} if
\begin{equation}\label{e9-156}
    \tan\alpha_1=\lim_{x\to-\infty}V(x),\qquad \tan\alpha_2=\lim_{x\to-0}V(x).
\end{equation}
The following connection between the classes $S^{\,\alpha}$ and $S^{\,\alpha_1,\alpha_2}$ can be found in \cite{ABT}.
Let $\Theta$ be an L-system of the form \eqref{e6-3-2} with a densely defined non-negative symmetric operator $\dA$ with deficiency numbers $(1,1).$ Let also $\bA$ be an $\alpha$-sectorial $(*)$-extension of $T$. Then the impedance function $V_\Theta(z)$ defined by \eqref{e6-3-5} belongs to the class $S^{\alpha_1,\alpha_2}$, $\tan\alpha_2\le\tan\alpha$. Moreover,  the main operator $T$ is $(\alpha_2-\alpha_1)$-sectorial with the exact  angle of sectoriality $(\alpha_2-\alpha_1)$.
In the case when  $\alpha$  is the exact angle of sectoriality of the operator $T$ we have that $V_\Theta(z)\in S^{0,\alpha}$ (see \cite{ABT}).
It also follows that under this set of assumptions, the impedance function $V_\Theta(z)$ is such that $\gamma=0$ in representation \eqref{e8-94}.

Now let $\Theta$ be an   L-system of the form \eqref{e6-3-2}, where $\bA$ is a $(*)$-extension of $T$ and $\dA$ is a closed densely defined non-negative symmetric operator  with deficiency numbers $(1,1).$ It was proved in \cite{ABT} that if  the impedance function $V_\Theta(z)$ belongs to the class  $S^{\alpha_1,\alpha_2}$, then $\bA$ is $\alpha$-sectorial, where
\begin{equation}\label{e9-176}
    \tan\alpha=\tan\alpha_2+2\sqrt{\tan\alpha_1(\tan\alpha_2-\tan\alpha_1)}.
\end{equation}
 Under the above set of conditions on L-system $\Theta$ it is shown in \cite{ABT} that  $\bA$ is $\alpha$-sectorial $(*)$-extension of an $\alpha$-sectorial operator $T$ with  the exact angle $\alpha\in(0,\pi/2)$ if and only if $V_\Theta(z)\in S^{0,\alpha}$.
 Moreover, the angle $\alpha$ can be found via the formula
 \begin{equation}\label{e9-178-new}
\tan\alpha=\int_0^\infty\frac{dG(t)}{t},
 \end{equation}
 where $G(t)$ is the measure from integral representation \eqref{e8-94} of $V_\Theta(z)$.

Now we introduce \textbf{sectorial subclasses $S^{-1,\alpha}$ } of scalar inverse Stieltjes functions as follows.
 An inverse Stieltjes function $V(z)$ belongs to $S^{-1,\alpha}$ if
\begin{equation}\label{e9-180-i}
K_{\alpha}= \sum_{k,l=1}^n\left[\frac{ V(z_k)/z_k-V(\bar z_l)/\bar z_l}{z_k-\bar z_l}-{{(\cot\alpha)}~} \frac{V(\bar z_l)}{\bar z_l}\frac{V(z_k)}{z_k}\right]h_k\bar h_l\ge0,
\end{equation}
for an arbitrary sequences of complex numbers $\{z_k\}$, ($\IM z_k>0$) and $\{h_k\}$, ($k=1,...,n$). For
$0<\alpha_1< \alpha_2 <\frac{\pi}{2}$, we have
\begin{equation*}
S^{-1,\alpha_1}\subset S^{-1, \alpha_2}\subset{S^{-1}},
\end{equation*}
where $S^{-1}$ denotes the class of all inverse Stieltjes functions (which corresponds to the case $\alpha=\frac{\pi}{2}$).

Let $\Theta$ be an accumulative minimal L-system of the form \eqref{e6-3-2}. It was shown in  \cite{BT14} that  the impedance function $V_\Theta(z)$ defined by \eqref{e6-3-5} belongs to the class $S^{-1,\alpha}$ if and only if the operator $\ti\bA$ of the form \eqref{e-14} associated to the L-system $\Theta$ is $\alpha$-sectorial.

Let
$0\le \alpha_1<\alpha_2\le \frac{\pi}{2}.$
We say that a scalar inverse Stieltjes function $V(z)$ of the class $S_0^{-1}(R)$ belongs to the \textbf{class $S^{-1,\alpha_1,\alpha_2}$} if
\begin{equation}\label{e9-156-i}
    \tan(\pi-\alpha_1)=\lim_{x\to0}V(x),\qquad \tan(\pi-\alpha_2)=\lim_{x\to-\infty}V(x).
\end{equation}
The following connection between the classes $S^{-1,\alpha}$ and $S^{-1,\alpha_1,\alpha_2}$ was established in \cite{BT14}.
Let $\Theta$ be an accumulative L-system of the form \eqref{e6-3-2} with a densely defined non-negative symmetric operator $\dA$. Let also $\ti\bA$ of the form \eqref{e-14} be  $\alpha$-sectorial. Then the impedance function $V_\Theta(z)$ defined by \eqref{e6-3-5} belongs to the class $S^{-1,\alpha_1,\alpha_2}$. Moreover, the operator $T$ of $\Theta$ is $(\alpha_2-\alpha_1)$-sectorial 
    with the exact  angle of sectoriality $(\alpha_2-\alpha_1)$, and $\tan\alpha_2\le\tan\alpha$.
Note, that this  also remains valid for the case when the operator $\ti \bA$ is accretive but not $\alpha$-sectorial for any $\alpha\in(0,\pi/2)$.
It also follows that under the same set of assumptions, if   $\alpha$ is the exact angle of sectoriality of the operator $T$, then  $V_\Theta(z)\in S^{-1,0,\alpha}$ and is such that $\gamma=0$ and $\beta=0$ in \eqref{e4-1-8}.

 Let $\Theta$ be a minimal accumulative L-system of the form \eqref{e6-3-2} as above.  Let also $\ti\bA$ be defined via \eqref{e-14}. It was shown in \cite{BT14} that if  the impedance function $V_\Theta(z)$ belongs to the class  $S^{-1,\alpha_1,\alpha_2}$, then $\ti\bA$ is $\alpha$-sectorial, where $\tan\alpha$ is defined via \eqref{e9-178-new}
 where $G(t)$ is the measure from integral representation \eqref{e4-1-8} of $V_\Theta(z)$. Moreover, both $\ti\bA$ and $T$ are $\alpha$-sectorial  operators  with  the exact angle $\alpha\in(0,\pi/2)$ if and only if  $V_\Theta(z) \in S^{-1,0,\alpha}$ (see  \cite[Theorem 13]{BT14}).

\section{L-systems with Schr\"odinger operator}\label{s5}

Let $\calH=L_2[a,+\infty)$ and $l(y)=-y^{\prime\prime}+q(x)y$, where $q$ is a real locally summable function. Suppose that the symmetric operator
\begin{equation}
\label{128}
 \left\{ \begin{array}{l}
 \dA y=-y^{\prime\prime}+q(x)y \\
 y(a)=y^{\prime}(a)=0 \\
 \end{array} \right.
\end{equation}
has deficiency indices (1,1). Let $D^*$ be the set of functions locally absolutely continuous together with their first derivatives such that $l(y) \in L_2[a,+\infty)$. Consider $\calH_+=\dom(\dA^*)=D^*$ with the scalar product
$$(y,z)_+=\int_{a}^{\infty}\left(y(x)\overline{z(x)}+l(y)\overline{l(z)}
\right)dx,\;\; y,\;z \in D^*.$$ Let $\calH_+ \subset L_2[a,+\infty) \subset \calH_-$ be the corresponding triplet of Hilbert spaces. Consider the operators
\begin{equation}\label{131}
 \left\{ \begin{array}{l}
 T_hy=l(y)=-y^{\prime\prime}+q(x)y \\
 hy(a)-y^{\prime}(a)=0 \\
 \end{array} \right.
           ,\quad  \left\{ \begin{array}{l}
 T^*_hy=l(y)=-y^{\prime\prime}+q(x)y \\
 \overline{h}y(a)-y^{\prime}(a)=0 \\
 \end{array} \right.,
\end{equation}
Let  $\dA$ be a symmetric operator  of the form \eqref{128} with deficiency indices (1,1), generated by the differential operation $l(y)=-y^{\prime\prime}+q(x)y$. Let also $\varphi_k(x,\lambda) (k=1,2)$ be the solutions of the following Cauchy problems:
$$\left\{ \begin{array}{l}
 l(\varphi_1)=\lambda \varphi_1 \\
 \varphi_1(a,\lambda)=0 \\
 \varphi'_1(a,\lambda)=1 \\
 \end{array} \right., \qquad
\left\{ \begin{array}{l}
 l(\varphi_2)=\lambda \varphi_2 \\
 \varphi_2(a,\lambda)=-1 \\
 \varphi'_2(a,\lambda)=0 \\
 \end{array} \right.. $$
It is well known \cite{Na68} that there exists a function $m_\infty(\lambda)$  for which
$$\varphi(x,\lambda)=\varphi_2(x,\lambda)+m_\infty(\lambda)
\varphi_1(x,\lambda)$$ belongs to $L_2[a,+\infty)$.

Suppose that the symmetric operator $\dA$ of the form \eqref{128} with deficiency indices (1,1) is nonnegative, i.e., $(\dA f,f) \geq 0$ for all $f \in \dom(\dA))$. For one-dimensional Shr\"odinger operator on the semi-axis the Phillips-Kato extension problem in restricted sense has the following form.
\begin{theorem}[\cite{AT2009}, \cite{Ts2}, \cite{T87}]\label{t-6}
Let $\dA$ be a nonnegative symmetric Schr\"odinger operator of the form \eqref{128} with deficiency indices $(1, 1)$ and locally summable potential in $\calH=L^2[a, \infty).$ Consider operator $T_h$ of the form \eqref{131}.  Then
 \begin{enumerate}
\item operator $\dA$ has more than one non-negative self-adjoint extension, i.e., the Friedrichs extension $A_F$ and the Kre\u{\i}n-von Neuman extension $A_K$ do not coincide, if and only if $m_{\infty}(-0)<\infty$;
 \item operator $T_h$ coincides with the Kre\u{\i}n-von Neuman extension if and  only if $h=-m_{\infty}(-0)$;
\item operator $T_h$ is accretive if and only if
\begin{equation}\label{138}
\RE h\geq-m_\infty(-0);
\end{equation}
\item operator $T_h$, ($h\ne\bar h$) is $\alpha$-sectorial if and only if  $\RE h >-m_{\infty}(-0)$ holds;
\item operator $T_h$, ($h\ne\bar h$) is accretive but not $\alpha$-sectorial for any $\alpha\in (0, \frac{\pi}{2})$ if and only if $\RE h=m_{\infty}(-0)$ \item If $T_h, (\IM h>0)$ is $\alpha$-sectorial,
then the angle $\alpha$ can be calculated via
\begin{equation}\label{e10-45}
\tan\alpha=\frac{\IM h}{\RE h+m_{\infty}(-0)}.
\end{equation}

\end{enumerate}
\end{theorem}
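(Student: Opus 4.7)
The plan is to reduce every clause of the theorem to the single quadratic-form identity
\[
(T_h y,y)=h|y(a)|^2+\tau[y],\qquad y\in\dom(T_h),
\]
where $\tau[y]=\int_a^\infty\bigl(|y'|^2+q(x)|y|^2\bigr)\,dx$. This identity comes from a single integration by parts in $\int_a^\infty l(y)\overline{y}\,dx$, with the boundary contribution at $+\infty$ vanishing for $y\in D^*$ as a standard consequence of $y,l(y)\in L^2[a,+\infty)$ together with the assumed non-negativity of $\dA$. Taking real and imaginary parts yields
\[
\RE(T_h y,y)=\RE h\cdot|y(a)|^2+\tau[y],\qquad \IM(T_h y,y)=\IM h\cdot|y(a)|^2.
\]

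The crucial analytic ingredient is the sharp boundary estimate
\[
\inf_{y\in D^*,\,y(a)\ne 0}\frac{\tau[y]}{|y(a)|^2}=-m_\infty(-0),
\]
with the infimum realised by $\varphi(x,0)=\varphi_2(x,0)+m_\infty(-0)\varphi_1(x,0)$; indeed, Lagrange's identity applied to $\varphi(\cdot,0)$ gives $\tau[\varphi(\cdot,0)]=\varphi'(a,0)\overline{\varphi(a,0)}=-m_\infty(-0)$, and minimality follows via Parseval relative to the spectral measure of any non-negative self-adjoint extension. Clause~(1) is then a direct translation: the Friedrichs extension $A_F$ (boundary condition $y(a)=0$) and the Kre\u{\i}n--von Neumann extension $A_K$ differ iff the infimum above is finite, i.e.\ iff $m_\infty(-0)<\infty$. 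Clause~(2) identifies $A_K$ as the non-negative self-adjoint extension whose boundary condition coincides with the form-minimiser: $y'(a)=-m_\infty(-0)y(a)$, that is, $h=-m_\infty(-0)$.

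Clauses~(3)--(5) follow from combining the real-part formula with the form bound. Sufficiency in (3) is immediate: $\RE(T_h y,y)\ge(\RE h+m_\infty(-0))|y(a)|^2\ge 0$ whenever $\RE h\ge -m_\infty(-0)$. For necessity I would construct a sequence $y_n\in\dom(T_h)$ approximating $\varphi(\cdot,0)$, obtained by a compactly supported cutoff of $\varphi(\cdot,0)$ corrected by a small multiple of $\varphi_1(\cdot,0)$ so as to enforce $h y_n(a)=y_n'(a)$. This forces $\tau[y_n]/|y_n(a)|^2\to -m_\infty(-0)$, producing $\RE(T_h y_n,y_n)<0$ whenever $\RE h<-m_\infty(-0)$ and blowing up the ratio $|\IM(T_h y_n,y_n)|/\RE(T_h y_n,y_n)$ when $\RE h=-m_\infty(-0)$ and $\IM h\ne 0$. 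This one construction gives simultaneously the necessary direction of (3), the $\alpha$-sectoriality dichotomy (4)--(5), and --- by tracking the precise limiting value of the ratio --- the formula
\[
\tan\alpha=\sup_{y\in\dom(T_h)}\frac{|\IM(T_h y,y)|}{\RE(T_h y,y)}=\frac{\IM h}{\RE h+m_\infty(-0)}
\]
of clause~(6).

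The main obstacle is this approximation step: one must truncate $\varphi(\cdot,0)$ at large $x$ without destroying the sharp form bound and simultaneously match the inhomogeneous boundary condition cutting out $\dom(T_h)$. I would handle the boundary-condition matching by adding a small multiple of $\varphi_1(\cdot,0)$, which vanishes at $x=a$ and therefore does not perturb $y_n(a)$, while absorbing the cutoff-induced perturbation of $\tau$; a careful estimate then shows that the residual contribution is $o(1)$ as the cutoff is pushed to infinity, which is all that is required to complete every clause above.
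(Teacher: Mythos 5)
First, note that the paper itself contains no proof of Theorem~\ref{t-6}: it is imported verbatim from \cite{AT2009}, \cite{Ts2}, \cite{T87}, where the argument runs through Kre\u{\i}n's theory of nonnegative and sectorial extensions rather than a bare-hands variational argument, so your proposal can only be judged on its own terms. On those terms it has two genuine defects. The first is that your ``crucial analytic ingredient'' is stated with the wrong sign and is internally inconsistent with the way you later use it. With the paper's normalization $\varphi(a,0)=-1$, $\varphi'(a,0)=m_\infty(-0)$, the Lagrange/integration-by-parts identity gives $\tau[\varphi(\cdot,0)]=-\varphi'(a,0)\overline{\varphi(a,0)}=+m_\infty(-0)$ (modulo the boundary term at $+\infty$), not $\varphi'(a,0)\overline{\varphi(a,0)}=-m_\infty(-0)$, and the sharp constant is $\inf\tau[y]/|y(a)|^2=m_\infty(-0)$, not $-m_\infty(-0)$. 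A concrete check: for $q\equiv c>0$ one has $m_\infty(\lambda)=\sqrt{c-\lambda}$, so $m_\infty(-0)=\sqrt{c}$, while $\inf\int_a^\infty(|y'|^2+c|y|^2)\,dx/|y(a)|^2=\sqrt{c}$, attained along $-e^{-\sqrt{c}(x-a)}$; your stated infimum $-\sqrt{c}$ would make clause (3) read ``accretive iff $\RE h\ge\sqrt c$,'' which is false (take $h=0$, the Neumann condition). Your sufficiency step $\RE(T_hy,y)\ge(\RE h+m_\infty(-0))|y(a)|^2$ silently uses the correct sign, so the slip is repairable, but as written the key lemma contradicts both the theorem and your own application of it; moreover the lower bound $\tau[y]\ge m_\infty(-0)|y(a)|^2$ is not established by the one-line appeal to ``Parseval,'' and the finiteness of $\tau[y]$ together with $y'(x)\overline{y(x)}\to0$ for \emph{every} $y\in\dom(T_h)$ (limit-point case, $q$ only locally summable) is itself a nontrivial fact needing proof, not a ``standard consequence.''

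The second, more serious, defect is the approximation step on which the necessity directions of (3)--(5) and the exactness of \eqref{e10-45} all rest. Cutting off $\varphi(\cdot,0)$ at large $x$ does not change its boundary data at $a$, so to force $hy_n(a)=y_n'(a)$ you must shift $y_n'(a)$ by the \emph{fixed} amount $(h+m_\infty(-0))y_n(a)$; the multiple of $\varphi_1(\cdot,0)$ you add is therefore not small unless $h=-m_\infty(-0)$. Worse, $\varphi_1(\cdot,0)$ is in general not square integrable, so it too must be truncated, and for a zero-energy solution vanishing at $a$ the truncated Dirichlet form equals $\varphi_1'(b)\overline{\varphi_1(b)}$ at the cutoff point $b$, which typically diverges as $b\to\infty$: the induced perturbation of $\tau$ is not $o(1)$, and the construction as described collapses. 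The standard repair is different in both respects: enforce the boundary condition by a perturbation supported in $[a,a+\delta]$ (it vanishes at cost $O(\delta)$ in the form), and obtain the sharp constant not from $\varphi(\cdot,0)$ but from the genuine Weyl solutions $\varphi(\cdot,\lambda)\in L_2$, $\lambda<0$, for which $\tau[\varphi(\cdot,\lambda)]=m_\infty(\lambda)+\lambda\|\varphi(\cdot,\lambda)\|^2$, letting $\lambda\uparrow0$ along a sequence with $|\lambda|\,\|\varphi(\cdot,\lambda)\|^2\to0$ (possible when $m_\infty(-0)$ is finite, since $\|\varphi(\cdot,\lambda)\|^2=|m_\infty'(\lambda)|$ is then integrable near $0$). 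Until the key lemma is proved with the correct sign and this minimizing-sequence argument is replaced by one that actually works inside $\dom(T_h)$, clauses (3)--(6) are not established by your sketch; clauses (1)--(2) likewise rest on the Kre\u{\i}n--Birman characterization of $A_K$ via the form minimum, which you assert but do not prove.
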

For the remainder of this paper we assume that $m_{\infty}(-0)<\infty$. Then according to Theorem \ref{t-6} above (see also \cite{Ts81}, \cite{Ts80}) we have the existence of the operator $T_h$, ($\IM h>0$) that is accretive and/or sectorial.
The following  was shown in \cite{ABT}. Let  $T_h \;(\IM h>0)$ be an accretive Schr\"odinger operator of the
form \eqref{131}. Then for all real $\mu$ satisfying the following inequality
\begin{equation}\label{151}
\mu \geq \frac {(\IM h)^2}{m_\infty(-0)+\RE h}+\RE h,
\end{equation}
the operators
\begin{equation}\label{137}
\begin{split}
&\bA y=-y^{\prime\prime}+q(x)y-\frac {1}{\mu-h}\,[y^{\prime}(a)-
hy(a)]\,[\mu \delta (x-a)+\delta^{\prime}(x-a)], \\
&\bA^* y=-y^{\prime\prime}+q(x)y-\frac {1}{\mu-\overline h}\,
[y^{\prime}(a)-\overline hy(a)]\,[\mu \delta
(x-a)+\delta^{\prime}(x-a)],
\end{split}
\end{equation}
 define the set of all accretive $(*)$-extensions $\bA$ of the operator $T_h$.
The accretive operator $T_h$ has a unique accretive
$(*)$-extension $\bA$ if and only if
$$\RE h=-m_\infty(-0).$$
In this case this unique $(*)$-extension has the form
\begin{equation}\label{153}
\begin{aligned}
&\bA y=-y^{\prime\prime}+q(x)y+[hy(a)-y^{\prime}(a)]\,\delta(x-a), \\
&\bA^* y=-y^{\prime\prime}+q(x)y+[\overline h
y(a)-y^{\prime}(a)]\,\delta(x-a).
\end{aligned}
\end{equation}

Now we shall construct an L-system based on a non-self-adjoint Schr\"odinger operator $T_h$.  It  was shown in \cite{ArTs0}, \cite{ABT} that  the set of all ($*$)-extensions of a non-self-adjoint Schr\"odinger operator $T_h$ of the form \eqref{131} in $L_2[a,+\infty)$ can be represented in the form \eqref{137}.
Moreover, the formulas \eqref{137} establish a one-to-one correspondence between the set of all ($*$)-extensions of a Schr\"odinger operator $T_h$ of the form \eqref{131} and all real numbers $\mu \in [-\infty,+\infty]$. One can easily check that the ($*$)-extension $\bA$ in \eqref{137} of the non-self-adjoint dissipative Schr\"odinger operator $T_h$, ($\IM h>0$) of the form \eqref{131} satisfies the condition
\begin{equation*}\label{145}
\IM\bA=\frac{\bA - \bA^*}{2i}=(.,g)g,
\end{equation*}
where
\begin{equation}\label{146}
g=\frac{(\IM h)^{\frac{1}{2}}}{|\mu - h|}\,[\mu\delta(x-a)+\delta^{\prime}(x-a)]
\end{equation}
and $\delta(x-a), \delta^{\prime}(x-a)$ are the delta-function and
its derivative at the point $a$, respectively. Moreover,
\begin{equation*}\label{147}
(y,g)=\frac{(\IM h)^{\frac{1}{2}}}{|\mu - h|}\ [\mu y(a)
-y^{\prime}(a)],
\end{equation*}
where $y\in \calH_+$, $g\in \calH_-$, $\calH_+ \subset L_2(a,+\infty) \subset \calH_-$ and the triplet of Hilbert spaces discussed above. Let $E=\dC$, $K{c}=cg \;(c\in \dC)$. It is clear that
\begin{equation}\label{148}
K^* y=(y,g),\quad  y\in \calH_+,
\end{equation}
and $\IM\bA=KK^*.$ Therefore, the array
\begin{equation}\label{149}
\Theta= \begin{pmatrix} \bA&K&1\cr \calH_+ \subset
L_2[a,+\infty) \subset \calH_-& &\dC\cr \end{pmatrix},
\end{equation}
is an L-system  with the main operator $\bA$ of the form \eqref{e6-3-2} with the channel operator $K$ of the form \eqref{148}.  It was shown in
\cite{ArTs0}, \cite{ABT} that the transfer and impedance functions of $\Theta$ are
\begin{equation*}\label{150}
W_\Theta(\lambda)= \frac{\mu -h}{\mu - \overline h}\,\,
\frac{m_\infty(\lambda)+ \overline h}{m_\infty(\lambda)+h},
\end{equation*}
and
\begin{equation}\label{1501}
V_{\Theta}(\lambda)=\frac{\left(m_\infty(\lambda)+\mu\right)\IM h}
{\left(\mu-\RE h\right)m_\infty(\lambda)+\mu\RE h-|h|^2}.
\end{equation}
It was proved in \cite{ABT} that if  $\Theta$ is an L-system of the form \eqref{149}, where $\bA$ is a $(*)$-extension of the form \eqref{137} of an accretive Schr\"odinger operator $T_h$ of the form \eqref{131}, then
its impedance function $V_\Theta(z)$ is  Stieltjes function if and only if \eqref{151} holds and  inverse Stieltjes function  if and only if
\begin{equation}\label{250}
-m_\infty(-0) \leq \mu \leq \RE h.
\end{equation}
Using formulas \eqref{137} and direct calculations one can obtain the formula for operator $\ti\bA$ of the form \eqref{e-14} as follows
\begin{equation}\label{e-27}
    \begin{aligned}
    \ti\bA y=-y^{\prime\prime}&+q(x)y-y'(a)\delta(x-a)-y(a)\delta'(x-a)\\
   & +\frac {1}{\mu-h}\,[y'(a)-hy(a)]\,[\mu \delta (x-a)+\delta'(x-a)].
    \end{aligned}
\end{equation}

\section{Sectorial L-systems with Schr\"odinger operator}\label{s6}

Let  $\Theta$ be an L-system of the form \eqref{149}, where $\bA$ is a ($*$)-extension \eqref{137} of the accretive Schr\"odinger operator $T_h$.  According to Theorem \ref{t-6} we have that if an accretive Schr\"odinger operator $T_h$, ($\IM h>0$)  is $\alpha$-sectorial, then \eqref{e10-45} holds.
Conversely, if $h$, ($\IM h>0$) is such that $\RE h>-m_\infty(-0),$ then operator $T_h$ of the form \eqref{131} is $\alpha$-sectorial and $\alpha$ is determined by \eqref{e10-45}. Moreover, $T_h$  is
accretive but not $\alpha$-sectorial for any $\alpha\in(0,\pi/2)$ if and only if $\RE h=-m_\infty(-0)$.
Also (see  \cite{ABT}) the operator $\bA$ of $\Theta$ is accretive if and only if \eqref{151} holds.
Consider our system $\Theta$ with $\mu=+\infty$. It was shown in \cite{B2011} that in this case $V_\Theta(z)$ belongs to the class $S^{0,\alpha}$. In the case when  $\mu\ne+\infty$ we have $V_\Theta(z)\in S^{\alpha_1,\alpha_2}$ (see \cite{B2011}).
\begin{theorem}[\cite{ABT}]\label{t10-17}
Let $\Theta$
be an L-system of the form \eqref{149}, where $\bA$ is a ($*$)-extension of an $\alpha$-sectorial operator $T_h$
with the exact angle of sectoriality $\alpha\in(0,\pi/2)$. Then $\bA$ is an $\alpha$-sectorial ($*$)-extension of $T_h$ (with the same angle of sectoriality) if and only if
$\mu=+\infty$ in \eqref{137}.
\end{theorem}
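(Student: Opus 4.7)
The plan is to reduce Theorem~\ref{t10-17} to a direct membership test for the impedance function $V_\Theta$ in the class $S^{0,\alpha}$, and then read off the two boundary values of $V_\Theta$ from the explicit formula \eqref{1501}. By the criterion recalled in Section~\ref{s4} (the characterization via formula \eqref{e9-178-new} and the line immediately preceding it), under the standing assumptions $\bA$ is an $\alpha$-sectorial $(*)$-extension of the $\alpha$-sectorial operator $T_h$ \emph{with the same exact angle} $\alpha$ if and only if $V_\Theta\in S^{0,\alpha}$. By definition \eqref{e9-156}, this is equivalent to
\[
\lim_{x\to -\infty}V_\Theta(x)=0 \qquad\text{and}\qquad \lim_{x\to -0}V_\Theta(x)=\tan\alpha,
\]
and, by \eqref{e10-45}, to the value $\tan\alpha=\IM h/(\RE h+m_\infty(-0))$. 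So the proof reduces to evaluating two limits of the rational-in-$m_\infty(\lambda)$ expression \eqref{1501}.

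For the \textbf{if} direction, set $\mu=+\infty$: dividing numerator and denominator of \eqref{1501} by $\mu$ and passing to the limit yields
\[
V_\Theta(\lambda)=\frac{\IM h}{m_\infty(\lambda)+\RE h},
\]
which matches the impedance of the $(*)$-extension \eqref{153} obtained from \eqref{137} by the same $\mu\to+\infty$ passage. Using the standard fact that $m_\infty(x)\to -\infty$ as $x\to -\infty$ (a consequence of the Herglotz representation of $m_\infty$ together with non-negativity of $\dA$), the first limit is $0$, and the value at $x\to -0$ is $\IM h/(m_\infty(-0)+\RE h)=\tan\alpha$ by \eqref{e10-45}. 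Hence $V_\Theta\in S^{0,\alpha}$, and the criterion delivers the claim.

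For the \textbf{only if} direction, assume $\mu$ is a finite real parameter (necessarily satisfying \eqref{151}, since $\bA$ is required to be accretive). Extracting leading terms from \eqref{1501} as $m_\infty(x)\to -\infty$ gives
\[
\lim_{x\to -\infty} V_\Theta(x)=\frac{\IM h}{\mu-\RE h}.
\]
Since $\IM h>0$ and \eqref{151} forces $\mu-\RE h>0$, this limit is strictly positive, so $V_\Theta\notin S^{0,\alpha}$; the criterion then yields that $\bA$ cannot be an $\alpha$-sectorial $(*)$-extension with the same exact angle. The main technical nuisance will be bookkeeping around the formal value $\mu=+\infty$: one needs to check that formally letting $\mu\to+\infty$ in \eqref{137} does recover a well-defined $(*)$-extension whose impedance is indeed $\IM h/(m_\infty(\lambda)+\RE h)$, i.e.\ that this passage is consistent with the one-to-one correspondence in \eqref{137} at the endpoint $\mu=+\infty$. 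Once that is settled the remaining work is purely the boundary-value arithmetic just outlined.
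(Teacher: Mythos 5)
Your route is the intended one: the paper does not actually prove Theorem \ref{t10-17} (it is quoted from \cite{ABT}), but the surrounding text reduces it exactly as you do, namely to the criterion recalled in Section \ref{s4} that $\bA$ is an $\alpha$-sectorial $(*)$-extension of the $\alpha$-sectorial $T_h$ with exact angle $\alpha$ if and only if $V_\Theta\in S^{0,\alpha}$, combined with reading off the two boundary values of $V_\Theta$ from \eqref{1501}; these are the same limit computations the paper performs in \eqref{e10-132}--\eqref{e10-133} (and, for the accretive case, cites to \cite{B2011}). Your evaluation of the two limits, the identification $\tan\alpha=\IM h/(\RE h+m_\infty(-0))$ via \eqref{e10-45}, and the observation that in the only-if direction sectoriality of $\bA$ forces \eqref{151}, hence $\mu>\RE h$ and a strictly positive limit at $-\infty$ incompatible with $S^{0,\alpha}$, are all correct.

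Two small repairs. First, with the paper's normalization ($\varphi_2(a,\lambda)=-1$, so that $m_\infty(z)=-i\sqrt z$ in the free case) one has $m_\infty(x)\to+\infty$, not $-\infty$, as $x\to-\infty$; moreover this divergence is not a consequence of a Herglotz-type representation alone (such a function can have a finite limit along the negative half-axis) but of the standard Weyl-function asymptotics $m_\infty(\lambda)\sim-i\sqrt\lambda$. Your limits survive because only $|m_\infty(x)|\to\infty$ is used, and the correct sign is in fact what you need anyway: together with monotonicity of $m_\infty$ on $(-\infty,0)$ and $\RE h>-m_\infty(-0)$ it gives $m_\infty(x)+\RE h\ge m_\infty(-0)+\RE h>0$, so the denominator in \eqref{1501} does not vanish and $V_\Theta\ge0$ on the negative half-axis. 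Second, membership in $S^{0,\alpha}$ presupposes that $V_\Theta$ is a Stieltjes function; for $\mu=+\infty$ (i.e., $\bA$ of the form \eqref{153}) this should be said explicitly, either by invoking the realization statement that $V_\Theta$ is Stieltjes exactly when \eqref{151} holds, with $\mu=+\infty$ admitted as the endpoint of the parametrization $\mu\in[-\infty,+\infty]$, or by checking directly that $\IM h/(m_\infty(z)+\RE h)$ is Herglotz, holomorphic and nonnegative on $(-\infty,0)$. With these points made explicit, the argument is complete and coincides with the proof the paper relies on.
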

We note that if $T_h$ is $\alpha$-sectorial with the exact angle of sectoriality $\alpha$, then it admits only one $\alpha$-sectorial ($*$)-extension $\bA$ with the same angle of sectoriality $\alpha$. Consequently, $\mu=+\infty$ and $\bA$ has the form \eqref{153}.

\begin{theorem}[\cite{ABT}]\label{t10-18}
Let $\Theta$ be an L-system of the form \eqref{149}, where $\bA$ is a ($*$)-extension of an $\alpha$-sectorial operator $T_h$
with the exact angle of sectoriality $\alpha\in(0,\pi/2)$. Then $\bA$ is accretive but not  $\alpha$-sectorial for any $\alpha\in(0,\pi/2)$ ($*$)-extension of $T_h$
 if and only if in \eqref{137}
\begin{equation}\label{e10-134}
\mu=\mu_0=\frac{(\IM h)^2}{m_\infty(-0)+\RE h}+\RE h.
\end{equation}
\end{theorem}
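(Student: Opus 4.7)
The plan is to translate the property ``$\bA$ is accretive but not $\alpha$-sectorial for any $\alpha\in(0,\pi/2)$'' into a condition on the impedance function $V_\Theta(z)$, and then read off the value of $\mu$ from the explicit formula \eqref{1501}. Specifically, by the realization/classification results recalled in Section \ref{s4} (and used in the discussion following \eqref{1501}), when $\bA$ is a $(*)$-extension of $T_h$ the state-space operator $\bA$ is $\alpha$-sectorial with some angle $\alpha\in(0,\pi/2)$ if and only if $V_\Theta$ is a Stieltjes function belonging to some class $S^{\alpha_1,\alpha_2}$ with $\alpha_2<\pi/2$. Equivalently, $\bA$ is accretive but fails to be $\alpha$-sectorial for every $\alpha\in(0,\pi/2)$ exactly when $V_\Theta$ is Stieltjes (i.e.\ $\mu\ge\mu_0$ by the characterization following \eqref{1501}) and $\lim_{x\to-0}V_\Theta(x)=+\infty$.

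Next I would compute this boundary limit directly from \eqref{1501}. Since $m_\infty(x)\to m_\infty(-0)$ as $x\to-0$, we get
\[
\lim_{x\to-0}V_\Theta(x)=\frac{\bigl(m_\infty(-0)+\mu\bigr)\IM h}{(\mu-\RE h)\,m_\infty(-0)+\mu\RE h-|h|^2}.
\]
A short algebraic rearrangement of the denominator yields
\[
(\mu-\RE h)m_\infty(-0)+\mu\RE h-|h|^2=\bigl(m_\infty(-0)+\RE h\bigr)(\mu-\RE h)-(\IM h)^2,
\]
which vanishes precisely when $\mu=\RE h+\dfrac{(\IM h)^2}{m_\infty(-0)+\RE h}=\mu_0$. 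Because $T_h$ is $\alpha$-sectorial with exact angle $\alpha\in(0,\pi/2)$, Theorem \ref{t-6} gives $\RE h+m_\infty(-0)>0$ and $\IM h>0$, so the numerator $(m_\infty(-0)+\mu_0)\IM h>0$. Hence the limit equals $+\infty$ iff $\mu=\mu_0$, and is a finite positive number for every $\mu>\mu_0$.

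Combining these two observations gives both directions. If $\mu=\mu_0$, inequality \eqref{151} shows $\bA$ is accretive, while the computation above forces $\lim_{x\to-0}V_\Theta(x)=+\infty$, so $V_\Theta$ cannot lie in any class $S^{\alpha_1,\alpha_2}$ with $\alpha_2<\pi/2$; invoking the realization theorem recalled in Section \ref{s4} rules out $\alpha$-sectoriality for any $\alpha\in(0,\pi/2)$. Conversely, if $\bA$ is accretive but not $\alpha$-sectorial for any $\alpha$, then \eqref{151} forces $\mu\ge\mu_0$, while for every $\mu>\mu_0$ the limit above is a finite number $\tan\alpha_2$ with $\alpha_2<\pi/2$, and formula \eqref{e9-176} then produces a finite sectoriality angle for $\bA$, contradicting the hypothesis; hence $\mu=\mu_0$.

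The main obstacle is the qualitative step, not the calculation: one must match ``accretive but not $\alpha$-sectorial'' with the precise impedance condition $\lim_{x\to-0}V_\Theta(x)=+\infty$, i.e.\ the boundary $\alpha_2=\pi/2$ of the family of $S^{\alpha_1,\alpha_2}$ classes. Once this structural equivalence (borrowed from the results in \cite{ABT} summarized in Section \ref{s4}) is in place, the rest is the elementary algebraic identification of the vanishing locus of the denominator of $V_\Theta$ with the value $\mu_0$.
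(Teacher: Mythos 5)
Your proposal is correct, and there is in fact no in-paper proof to compare it with: Theorem~\ref{t10-18} is stated with the citation \cite{ABT} and imported without proof. Your route --- translate ``accretive but not $\alpha$-sectorial for any $\alpha$'' into boundary behaviour of the impedance function, compute $\lim_{x\to-0}V_\Theta(x)$ from \eqref{1501}, and identify the vanishing locus of the denominator with $\mu=\mu_0$ --- is precisely the technique the paper itself uses to prove the accumulative counterpart, Theorem~\ref{t10-18-i} (the limits \eqref{e10-135}--\eqref{e10-136}, identification of the sectorial class, then the realization equivalences recalled in Section~\ref{s4}). Your algebraic identity for the denominator, $(\mu-\RE h)m_\infty(-0)+\mu\RE h-|h|^2=(m_\infty(-0)+\RE h)(\mu-\RE h)-(\IM h)^2$, is correct, and the hypotheses needed to invoke the Section~\ref{s4} results (densely defined non-negative $\dA$ with indices $(1,1)$, $\bA$ a $(*)$-extension) are satisfied for the Schr\"odinger L-system \eqref{149} under the standing assumptions.

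Two small points would tighten the argument. First, in the converse direction the accretive $(*)$-extensions of $T_h$ are parameterized by $\mu\in[\mu_0,+\infty]$, so besides finite $\mu>\mu_0$ you should also exclude $\mu=+\infty$; this is immediate from Theorem~\ref{t10-17} (that extension is $\alpha$-sectorial with the exact angle of $T_h$), or from the limiting form $V_\Theta(z)=\IM h\,(m_\infty(z)+\RE h)^{-1}$, whose limit at $-0$ equals $\tan\alpha<\infty$. Second, at $\mu=\mu_0$ your computation only shows that the denominator of the limiting quotient vanishes while the numerator stays positive, i.e.\ $|V_\Theta(x)|\to\infty$; to conclude the limit is $+\infty$ (so that $\alpha_2=\pi/2$ in the sense of \eqref{e9-156}) use that \eqref{151} holds at $\mu_0$, hence $V_\Theta$ is a Stieltjes function, nonnegative and nondecreasing on $(-\infty,0)$ --- equivalently, note that at $\mu=\mu_0$ the denominator equals $(\IM h)^2\bigl(m_\infty(x)-m_\infty(-0)\bigr)/\bigl(m_\infty(-0)+\RE h\bigr)$, which is positive for $x<0$ since $m_\infty$ decreases to $m_\infty(-0)$ there. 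With these remarks the proof is complete.
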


Note that it follows from the above theorem that any $\alpha$-sectorial operator $T_h$
with the exact angle of sectoriality $\alpha\in(0,\pi/2)$ admits only one accretive ($*$)-extension $\bA$ that is not  $\alpha$-sectorial for any $\alpha\in(0,\pi/2)$. This extension takes form \eqref{137} with
$\mu=\mu_0$ where $\mu_0$ is given by \eqref{e10-134}.

\begin{theorem}\label{t6-4}
Let $\Theta$ be an accretive  L-system of the form \eqref{149}, where $\bA$ is a ($*$)-extension
of a $\theta$-sectorial operator $T_h$. Let also  $\mu_*\in(\mu_0,+\infty)$ be a fixed value that determines  $\bA$ via \eqref{137},  $\mu_0$
be defined by \eqref{e10-134}, and  $V_\Theta(z)\in S^{\alpha_1,\alpha_2}$.  Then a ($*$)-extension $\bA_\mu$ of $T_h$ is $\beta$-sectorial for any $\mu\in [\mu_*,+\infty)$ with
\begin{equation}\label{e6-6}
 {\tan\beta=\tan\alpha_1+2\sqrt{\tan\alpha_1\,\tan\alpha_2}}.
\end{equation}
\end{theorem}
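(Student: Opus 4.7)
The plan is to exploit the explicit parametrization $\{\bA_\mu\}_{\mu\in[\mu_*,+\infty)}$ of the $(*)$-extensions of $T_h$ via \eqref{137} and monitor how the associated impedance function \eqref{1501} evolves with $\mu$. Writing $p=\RE h$, $q=\IM h$ and $m=m_\infty(-0)$, the defining limits \eqref{e9-156} applied to \eqref{1501} give the closed forms
\[
\tan\alpha_1(\mu)=\frac{q}{\mu-p},\qquad \tan\alpha_2(\mu)=\frac{(m+\mu)\,q}{(\mu-p)(m+p)-q^2},
\]
which at $\mu=\mu_*$ reproduce the given $\alpha_1,\alpha_2$, satisfy $\tan\alpha_1(\mu_0)=\cot\theta$ and $\tan\alpha_2(\mu_0)=+\infty$ at the accretive-but-not-sectorial endpoint, and tend to $0$ and $\tan\theta$ respectively as $\mu\to+\infty$. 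A short differentiation shows that both $\tan\alpha_i(\mu)$ are strictly decreasing on $(\mu_0,+\infty)$.

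By the realization statements collected in Section \ref{s4}, each $V_{\Theta_\mu}\in S^{\alpha_1(\mu),\alpha_2(\mu)}$ forces $\bA_\mu$ to be $\alpha(\mu)$-sectorial with $\alpha(\mu)$ monotone in the pair $(\alpha_1(\mu),\alpha_2(\mu))$. Combined with the strict decrease of both $\tan\alpha_i(\mu)$, this makes the worst sectoriality angle on $[\mu_*,+\infty)$ attained at $\mu=\mu_*$, so it suffices to prove that $\bA_{\mu_*}$ itself is $\beta$-sectorial with the claimed $\tan\beta$; the same $\beta$ will then serve the whole family $\{\bA_\mu\}_{\mu\geq\mu_*}$.

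The main obstacle is pinning down the precise value $\tan\beta=\tan\alpha_1+2\sqrt{\tan\alpha_1\tan\alpha_2}$, which is sharper than the generic upper bound produced by \eqref{e9-176} and must therefore exploit the concrete Schr\"odinger structure of $\bA_{\mu_*}$. My plan is to compute $(\bA_{\mu_*}f,f)$ directly from \eqref{137} through the $\calH_-$--$\calH_+$ pairing so that the $\delta(x-a)$ and $\delta'(x-a)$ terms collect into a Hermitian form in the boundary data $(f(a),f'(a))$; after the self-adjoint Dirichlet integral is separated off, the sectoriality of $\bA_{\mu_*}$ reduces to positivity of a $2\times 2$ quadratic form, whose exact opening angle yields $\tan\beta$ in closed form. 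A parallel route is to split $V_{\Theta_{\mu_*}}$ in \eqref{e8-94} into its constant part $\gamma=\tan\alpha_1$ and a residual Stieltjes function in $S^{0,\alpha'}$ with $\tan\alpha'=\tan\alpha_2-\tan\alpha_1$, apply \eqref{e9-178-new} to the latter, and combine the two contributions through a Schur-type estimate that reproduces exactly $\tan\alpha_1+2\sqrt{\tan\alpha_1\tan\alpha_2}$.
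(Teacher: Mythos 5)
Your computation of the limits $\tan\alpha_1(\mu)=\frac{\IM h}{\mu-\RE h}$ and $\tan\alpha_2(\mu)=\frac{(m_\infty(-0)+\mu)\IM h}{(\mu-\RE h)(m_\infty(-0)+\RE h)-(\IM h)^2}$ from \eqref{1501}, and of their strict decrease on $(\mu_0,+\infty)$, is correct and is exactly the ingredient the paper uses (it packages it into the function $f(\mu)$ of \eqref{e6-10}). But your argument is not complete. The central step --- that $\bA_{\mu_*}$ is $\beta$-sectorial with the specific constant $\tan\beta=\tan\alpha_1+2\sqrt{\tan\alpha_1\tan\alpha_2}$ --- is only announced as a plan (a boundary-form computation of $(\bA_{\mu_*}f,f)$ via \eqref{137}, or alternatively a splitting of $V_\Theta$ plus an unspecified ``Schur-type estimate''); neither route is carried out, and nothing in the proposal shows that either would land on exactly the expression \eqref{e6-6}. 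Your motivating premise is also off: \eqref{e6-6} is not a bound ``sharper than'' \eqref{e9-176} that must be extracted from the concrete Schr\"odinger structure. In the paper the logic runs the other way: the realization results of Section \ref{s4} already give that $\bA_{\mu}$ is $\alpha$-sectorial with $\tan\alpha$ given by \eqref{e9-176}, and the proof then simply compares this with \eqref{e6-6} (asserting $\tan\alpha<\tan\beta$), so $\beta$ plays the role of a convenient bound valid uniformly in $\mu$, not of an exact opening angle; depending on the relative size of $\tan\alpha_1$ and $\tan\alpha_2$, \eqref{e6-6} can even exceed \eqref{e9-176}, so the ``exact angle'' your $2\times2$ boundary form would produce need not coincide with $\beta$ at all.

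Second, your reduction of the whole family to the single value $\mu=\mu_*$ rests on the claim that the angle produced by \eqref{e9-176} is ``monotone in the pair $(\alpha_1(\mu),\alpha_2(\mu))$''. That is not true as a general principle: the function $\tan\alpha_2+2\sqrt{\tan\alpha_1(\tan\alpha_2-\tan\alpha_1)}$ can increase when both arguments decrease, because decreasing $\tan\alpha_1$ may increase $\tan\alpha_2-\tan\alpha_1$ and hence the square root (for instance the pair $(1,\,1.1)$ replaced by $(0.5,\,1.05)$ increases the value). So ``the worst angle is attained at $\mu=\mu_*$'' is unsupported as you state it. The paper avoids this by monotonizing the quantity \eqref{e6-6} itself: $f(\mu)$ in \eqref{e6-10} is increasing in each of $\tan\alpha_1(\mu)$, $\tan\alpha_2(\mu)$, hence decreasing in $\mu$, which yields $f(\mu)\le f(\mu_*)=\tan\beta$ for all $\mu\ge\mu_*$ and then the chain $|\IM(\bA_\mu f,f)|\le f(\mu)\,\RE(\bA_\mu f,f)\le(\tan\beta)\,\RE(\bA_\mu f,f)$ for $f\in\calH_+$. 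To repair your proposal you need (i) an actual proof of a sectoriality bound for each $\bA_\mu$ with the constant \eqref{e6-6} --- in the paper this is the comparison of \eqref{e9-176} with \eqref{e6-6}, not a new Schr\"odinger-specific computation --- and (ii) the monotonicity step phrased for the jointly monotone expression \eqref{e6-6} (or another jointly monotone majorant), not for \eqref{e9-176}.
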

\begin{proof}
According to \cite{ABT}, a $\varphi$-sectorial operator $\bA$ of an L-system of the form \eqref{149} with the impedance function of the class $S^{\alpha_1,\alpha_2}$ is also $\alpha$-sectorial with $\tan\alpha$ described by \eqref{e9-176}.
But then, clearly
\begin{equation}
    \tan\alpha<\tan\beta=\tan\alpha_1+2\sqrt{\tan\alpha_1\,\tan\alpha_2},
\end{equation}
and hence this $\bA$ is also $\beta$-sectorial.
\begin{figure}
  \begin{center}
  \includegraphics[width=70mm]{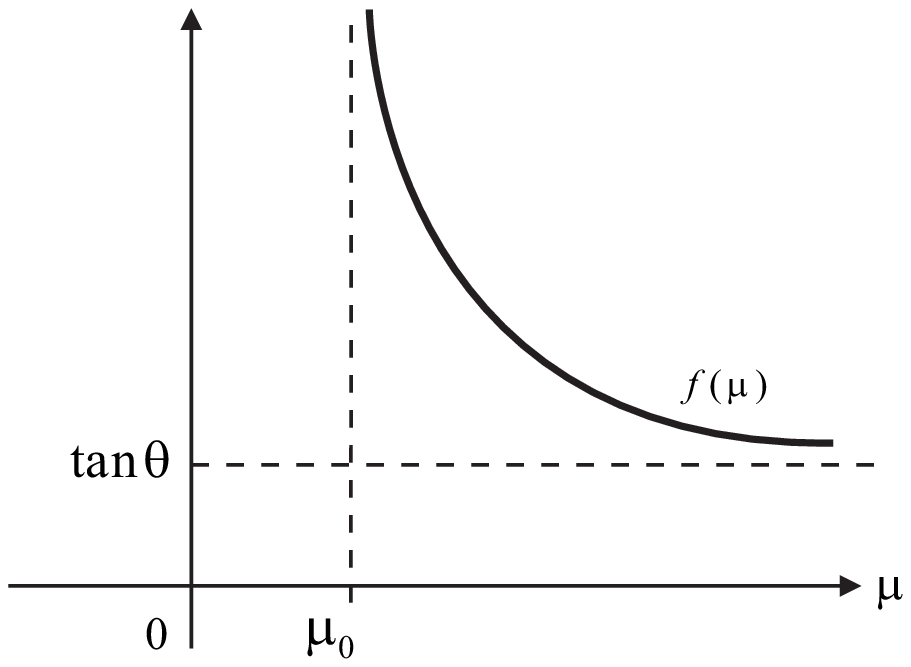}
  \caption{Function $f(\mu)$.}\label{fig-1}
  \end{center}
\end{figure}

Now suppose $\mu\in(\mu_0,+\infty)$. Then it follows from Theorem \ref{t10-18} that the operator $\bA$ in  L-system $\Theta$ of the form \eqref{149} is  $\varphi$-sectorial (with some angle $\varphi$) for any such $\mu$  in parametrization \eqref{137}. Using \eqref{e10-132} and \eqref{e10-133} on the impedance function $V_\Theta(z)$ of this L-system we can define a function
\begin{equation}\label{e6-10}
    \begin{aligned}
    f(\mu)=\tan\alpha_1&+2\sqrt{\tan\alpha_1\,\tan\alpha_2}=\frac{\left(m_\infty(-0)+\mu\right)\IM h}
{\left(\mu-\RE h\right)(m_\infty(-0)+\RE h)-(\IM h)^2}\\
&+2\sqrt{\frac{\IM h}{\mu-\RE h}\cdot \frac{\left(m_\infty(-0)+\mu\right)\IM h}
{\left(\mu-\RE h\right)(m_\infty(-0)+\RE h)-(\IM h)^2}}.
    \end{aligned}
\end{equation}
Recall that $\IM h>0$ and \eqref{138} together with \eqref{151} imply $\mu>\RE h$. It also follows from \eqref{138} and \eqref{151} that the first fraction in the right side of \eqref{e6-10} is positive for every $\mu\in(\mu_0,+\infty)$. Moreover, direct check  reveals that the derivative of this fraction is negative and hence it is a decreasing function on $\mu\in(\mu_0,+\infty)$. Consequently, the expression under the square root in the second term has a negative derivative and hence is a decreasing. This can be seen by applying the product rule and taking into account that $\frac{\IM h}{\mu-\RE h}$ is a positive term with a negative derivative. Thus, one confirms that $f(\mu)$ is a decreasing function defined on $(\mu_0,+\infty)$ with the range $[\tan\theta,+\infty)$, where $\theta$ is the angle of sectoriality of the operator $T_h$ and $\tan\theta$ is given by \eqref{e10-45}. The graph of this functions is schematically  given on the Figure \ref{fig-1}.

\begin{figure}
  \begin{center}
  \includegraphics[width=70mm]{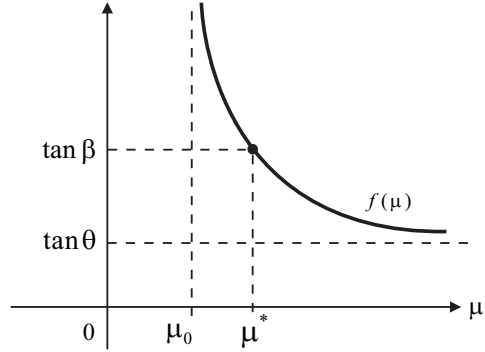}
  \caption{Angle of sectoriality $\beta$.}\label{fig-2}
  \end{center}
\end{figure}

Next we take the ($*$)-extension $\bA$ that is determined via \eqref{137} by the fixed value $\mu_*\in(\mu_0,+\infty)$ from the premise of our theorem. According to our derivations above this $\bA$ is $\beta$-sectorial with $\beta$ given by \eqref{e6-6}. But then for every $\mu\in (\mu_*,+\infty)$ the values of $f(\mu)$ are going to be smaller than $\tan \beta$ (see Figure \ref{fig-2}). Consequently, for a ($*$)-extension $\bA_\mu$ that is parameterized by the value of $\mu\in [\mu_*,+\infty)$ the following obvious inequalities take place
$$
 |\IM(\bA_\mu f,f)|\le f(\mu)\,\RE(\bA_\mu f,f)\le (\tan\beta)\,\RE(\bA_\mu f,f),\qquad f\in\calH_+.
$$
Hence,  any  ($*$)-extension $\bA_\mu$ parameterized by a $\mu\in [\mu_*,+\infty)$  is $\beta$-sectorial.
\end{proof}
Note that Theorem \ref{t6-4} provides us with a value $\beta$ which serves as a universal angle of sectoriality for the entire family of $(*)$-extensions $\bA$ of the form \eqref{137}.


It  was shown in \cite{ABT} that the operator $\bA$ of $\Theta$ is accumulative if and only if \eqref{250} holds. Using \eqref{1501} we can write the impedance function $V_\Theta(z)$ in the form
\begin{equation}\label{e10-128-i}
   V_\Theta(z)=\frac{\left(m_\infty(z)+\mu\right)\IM h}
{\left(\mu-\RE h\right)(m_\infty(z)+\RE h)-(\IM h)^2}.
\end{equation}

Let  $\mu$  satisfy the inequality \eqref{250}. Then
\begin{equation}\label{e10-133}
\lim_{x\to-0}V_\Theta(x)=\frac{\left(m_\infty(-0)+\mu\right)\IM h}
{\left(\mu-\RE h\right)(m_\infty(-0)+\RE h)-(\IM h)^2}=\tan(\pi-\alpha_1)=-\tan\alpha_1
\end{equation}
and
\begin{equation}\label{e10-132}
   \begin{aligned}
\lim_{x\to-\infty}V_\Theta(x)&=\lim_{x\to-\infty}\frac{\left(m_\infty(x)+\mu\right)\IM h}
{\left(\mu-\RE h\right)(m_\infty(x)+\RE h)-(\IM h)^2}=\frac{\IM h}{\mu-\RE h}\\
&=\tan(\pi-\alpha_2).
    \end{aligned}
\end{equation}
Therefore,  $V_\Theta(z)\in S^{-1,\alpha_1,\alpha_2}$, where $\alpha_1$ and $\alpha_2$ are defined by \eqref{e10-133} and \eqref{e10-132}.
\begin{theorem}\label{t10-17-i}
Let $\Theta$
be an L-system of the form \eqref{149}, where $\bA$ is an accumulative ($*$)-extension of an $\alpha$-sectorial operator $T_h$
with the exact angle of sectoriality $\alpha\in(0,\pi/2)$. Then the associated operator $\ti\bA$  is  $\alpha$-sectorial (with the same angle of sectoriality as $T_h$) if and only if $\mu=-m_\infty(-0)$ in \eqref{e-27}.
\end{theorem}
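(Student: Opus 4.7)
The plan is to reduce the statement to the classification result at the end of Section \ref{s4}: namely that both $\ti\bA$ and $T$ are $\alpha$-sectorial with the same exact angle $\alpha\in(0,\pi/2)$ if and only if $V_\Theta(z)\in S^{-1,0,\alpha}$ (the reference to \cite[Theorem 13]{BT14}). Thus the task becomes detecting the condition $\alpha_1=0$ in the membership $V_\Theta\in S^{-1,\alpha_1,\alpha_2}$ in terms of the parameter $\mu$ entering the formula \eqref{e-27} for $\ti\bA$.

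First I would invoke the explicit formulas \eqref{e10-133} and \eqref{e10-132} for the boundary values of $V_\Theta$ that determine the pair $(\alpha_1,\alpha_2)$. The condition $\alpha_1=0$ is equivalent to
\[
\lim_{x\to-0}V_\Theta(x)=\frac{(m_\infty(-0)+\mu)\IM h}{(\mu-\RE h)(m_\infty(-0)+\RE h)-(\IM h)^2}=0.
\]
Since $T_h$ is $\alpha$-sectorial with exact angle $\alpha\in(0,\pi/2)$, part (4) of \thmref{t-6} yields $\RE h>-m_\infty(-0)$ and $\IM h>0$. Substituting $\mu=-m_\infty(-0)$ into the denominator gives $-(m_\infty(-0)+\RE h)^2-(\IM h)^2<0$, so the denominator is nonzero. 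Hence the vanishing of the fraction is equivalent to the vanishing of the numerator, and because $\IM h>0$ this happens if and only if $\mu=-m_\infty(-0)$. Moreover this $\mu$ lies in the accumulativity interval \eqref{250}, since $-m_\infty(-0)<\RE h$, so the underlying L-system is genuinely accumulative.

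It remains to verify the matching of angles. Under $\mu=-m_\infty(-0)$, formula \eqref{e10-132} gives
\[
\tan(\pi-\alpha_2)=\frac{\IM h}{-m_\infty(-0)-\RE h}=-\frac{\IM h}{\RE h+m_\infty(-0)},
\]
so $\tan\alpha_2=\dfrac{\IM h}{\RE h+m_\infty(-0)}$, which by \eqref{e10-45} equals $\tan\alpha$. Thus $\alpha_2=\alpha$, and we conclude $V_\Theta\in S^{-1,0,\alpha}$; applying the cited \cite[Theorem 13]{BT14} then gives that $\ti\bA$ is $\alpha$-sectorial with exact angle equal to $\alpha$, the same as the exact angle of $T_h$. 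Conversely, if $\ti\bA$ is $\alpha$-sectorial with the same exact angle as $T_h$, the same cited theorem forces $V_\Theta\in S^{-1,0,\alpha}$, hence $\alpha_1=0$, which by the calculation above forces $\mu=-m_\infty(-0)$.

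There is no serious obstacle: the work is entirely a bookkeeping reduction from the operator-theoretic characterization in Section \ref{s4} to the concrete parameter $\mu$ via the explicit limit formulas. The only small subtlety is checking that the denominator in \eqref{e10-133} does not simultaneously vanish at $\mu=-m_\infty(-0)$, which would make the limit computation ambiguous; this is ruled out precisely by $\IM h>0$, itself a consequence of the hypothesis that $T_h$ is $\alpha$-sectorial with exact angle $\alpha>0$.
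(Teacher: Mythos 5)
Your proposal is correct and follows essentially the same route as the paper: reduce to the criterion that $\ti\bA$ and $T_h$ share the exact angle $\alpha$ iff $V_\Theta\in S^{-1,0,\alpha}$ (\cite[Theorem 13]{BT14}), and then use the limit formulas \eqref{e10-133}--\eqref{e10-132} to show this membership holds precisely when $\mu=-m_\infty(-0)$. You simply spell out the bookkeeping (nonvanishing of the denominator, $\mu$ lying in the accumulativity interval \eqref{250}, and $\tan\alpha_2=\tan\alpha$ via \eqref{e10-45}) that the paper leaves implicit.
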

\begin{proof}
It follows from \eqref{e10-133}-\eqref{e10-132} that in this case $V_\Theta(z)\in S^{-1,0,\alpha}$ if and only if $\mu=-m_\infty(-0)$. Thus, using \cite[Theorem 13]{BT14}  for
the function $V_\Theta(z)$ we obtain that $\ti\bA$ is $\alpha$-sectorial.
 \end{proof}

\begin{theorem}\label{t10-18-i}
Let $\Theta$ be an L-system of the form \eqref{149}, where $\bA$ is a ($*$)-extension of an $\alpha$-sectorial operator $T_h$
with the exact angle of sectoriality $\alpha\in(0,\pi/2)$. Then the associated operator $\ti\bA$ is accretive but not  $\alpha$-sectorial for any $\alpha\in(0,\pi/2)$ ($*$)-extension of $T_h$
 if and only if in \eqref{137}
\begin{equation}\label{e10-134-i}
\mu=\mu_0=\RE h.
\end{equation}
\end{theorem}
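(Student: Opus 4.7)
The plan follows the same pattern as the proof of Theorem \ref{t10-17-i}: reduce the claim to a boundary-value analysis of the impedance function $V_\Theta(z)$ and invoke \cite[Theorem~13]{BT14}. Under the standing hypotheses, Theorem \ref{t-6}(4) forces $\RE h>-m_\infty(-0)$, and accumulativity of $\bA$ (equivalently, accretivity of $\ti\bA$) confines $\mu$ to the closed interval $[-m_\infty(-0),\RE h]$ by \eqref{250}. The right endpoint of this interval is the candidate value $\mu_0=\RE h$, so the whole theorem reduces to separating the admissible $\mu$ into ``$\mu<\RE h$'' and ``$\mu=\RE h$''.

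For the necessity direction, suppose $\mu<\RE h$ lies in the admissible interval. Formulas \eqref{e10-133} and \eqref{e10-132} then give finite values for both $\tan\alpha_1$ and $\tan\alpha_2$, so $V_\Theta\in S^{-1,\alpha_1,\alpha_2}$ with $\alpha_1,\alpha_2\in[0,\pi/2)$. Applying \cite[Theorem~13]{BT14} as recalled in Section \ref{s4}, the associated operator $\ti\bA$ is genuinely $\alpha$-sectorial with $\tan\alpha=\int_0^\infty dG(t)/t$ finite. Consequently, the only value of $\mu$ in the admissible range at which $\ti\bA$ can possibly fail to be $\alpha$-sectorial for any $\alpha\in(0,\pi/2)$ is $\mu=\RE h$, which yields the ``only if'' implication.

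For sufficiency, I would set $\mu=\RE h$ and substitute into \eqref{e10-132}, obtaining
\[
\lim_{x\to-\infty}V_\Theta(x)=\frac{\IM h}{\RE h-\RE h}=-\infty,
\]
so $\tan\alpha_2=+\infty$ while a direct check of \eqref{e10-133} shows $\tan\alpha_1=(m_\infty(-0)+\RE h)/\IM h$ is finite. Using the integral representation \eqref{e4-1-8} with $\beta=0$ (which holds because $V_\Theta\in S_0^{-1}(R)$ under our standing hypotheses on $\dA$), monotone convergence gives $\lim_{x\to-\infty}V_\Theta(x)=\gamma-\int_0^\infty dG(t)/t$, so the blow-up at $-\infty$ is equivalent to the divergence $\int_0^\infty dG(t)/t=+\infty$. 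The extension of the integral criterion \eqref{e9-178-new} to the divergent regime, explicitly noted in Section~\ref{s4}, then says precisely that $\ti\bA$ is accretive but not $\alpha$-sectorial for any $\alpha\in(0,\pi/2)$. Accretivity of $\ti\bA$ at the endpoint is automatic since $\mu=\RE h$ is included in the closed interval of \eqref{250}.

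The main technical point requiring care is the equivalence between the divergence of $\int_0^\infty dG(t)/t$ and the behaviour $V_\Theta(x)\to-\infty$ as $x\to-\infty$; once this is pinned down via \eqref{e4-1-8} and $\beta=0$, the rest is a routine substitution into \eqref{1501}, \eqref{e10-132}, \eqref{e10-133} combined with the structural results of Section~\ref{s4}. Everything else mirrors the (very short) proof of Theorem \ref{t10-17-i}, and one should expect the written-up proof to be correspondingly brief.
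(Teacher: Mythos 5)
Your proposal is correct in substance and works with the same toolkit as the paper --- the boundary computations \eqref{e10-133}, \eqref{e10-132} for $V_\Theta$ together with the sectorial-class results of \cite{BT14} recalled in Section~\ref{s4} --- but the logical decomposition differs, and in one respect is more complete. For sufficiency ($\mu=\RE h$) the paper observes that the limit at $-\infty$ is infinite, so $V_\Theta\in S^{-1,\frac{\pi}{2}-\alpha,\frac{\pi}{2}}$, and derives the contradiction from the inequality $\tan\alpha>\tan\alpha_2=\infty$ of \cite[Theorem 11]{BT14}; you reach the same conclusion by detouring through \eqref{e4-1-8} and the divergence of $\int_0^\infty dG(t)/t$. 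That detour is fine, but your stated justification, ``the extension of the integral criterion \eqref{e9-178-new} to the divergent regime, explicitly noted in Section~\ref{s4},'' is not actually in the paper; what is available is the inequality $\tan\alpha_2\le\tan\alpha$ valid whenever $\ti\bA$ is $\alpha$-sectorial, whose contrapositive is exactly what you need, so cite that instead (also note \eqref{e10-132} is derived assuming $\mu\ne\RE h$; at $\mu=\RE h$ one should compute directly $V_\Theta(x)=-(m_\infty(x)+\RE h)/\IM h\to-\infty$, a gloss the printed proof commits as well). For necessity you prove the honest contrapositive: for $\mu\in[-m_\infty(-0),\RE h)$ both limits are finite, $V_\Theta\in S^{-1,\alpha_1,\alpha_2}$ with $\alpha_2<\pi/2$, and the realization result of \cite{BT14} together with finiteness of $\int_0^\infty dG(t)/t$ yields genuine sectoriality of $\ti\bA$; the paper's ``conversely'' paragraph instead only shows that sectoriality of $\ti\bA$ forces $\mu\ne\mu_0$, which merely restates the first implication contrapositively, so your organization covers the ``only if'' direction more explicitly than the printed proof does.
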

\begin{proof}
Let $V_\Theta(z)$ be the impedance function of our system $\Theta$. If in \eqref{e10-133} we set $\mu=\mu_0=\RE h$, then
\begin{equation}\label{e10-135}
   \begin{aligned}
\lim_{x\to 0}V_\Theta(x)&=-\frac{m_\infty(-0)+\RE h}{\IM h}
=-\frac{1}{\tan\alpha}=-\tan\left(\frac{\pi}{2}-\alpha\right)=-\tan\alpha_1\\
&=\tan(\pi-\alpha_1),
    \end{aligned}
\end{equation}
where $\alpha_1=\frac{\pi}{2}-\alpha$. On the other hand, using \eqref{e10-132} with $\mu=\mu_0=\RE h$ we obtain
\begin{equation}\label{e10-136}
\begin{aligned}
  \lim_{x\to-\infty}V_\Theta(x)&=\frac{\IM h}{\mu_0-\RE h}=-\infty=-\tan\frac{\pi}{2}=\tan(\pi-\alpha_2).
\end{aligned}
\end{equation}
Hence,  \eqref{e10-135} and \eqref{e10-136} yield that  $V_\Theta(z)\in S^{-1,\frac{\pi}{2}-\alpha,\frac{\pi}{2}}$. Now, if we assume the $\alpha$-sectoriality of $\bA$, then then by \cite[Theorem 11]{BT14}
$$
\tan\alpha>\tan\alpha_2=\infty.
$$
Therefore, $\ti\bA$ is accretive but not $\alpha$-sectorial for any $\alpha\in(0,\pi/2)$.

Conversely, suppose $\ti\bA$ is an $\alpha$-sectorial ($*$)-extension for some $\alpha\in(0,\pi/2)$. Then, according to Theorem \cite[Theorem 14]{BT14}, $\ti\bA$ is also $\beta$-sectorial and
$$
\tan\beta=\tan\alpha_2+2\sqrt{\tan\alpha_1(\tan\alpha_2-\tan\alpha_1)}<\infty.
$$
Hence, $\tan\alpha_2\ne\infty$ and it follows from \eqref{e10-136} that $\mu\ne\mu_0$. The theorem is proved.
 \end{proof}

\begin{theorem}\label{t6-4-i}
Let $\Theta$ be an accretive  L-system of the form \eqref{149}, where $\bA$ is a ($*$)-extension
of a $\theta$-sectorial operator $T_h$. Let also  $\mu_*\in[-m_\infty(-0),\mu_0)$ be a fixed value that parameterizes the associated operator $\ti\bA$ via \eqref{e-27},  $\mu_0=\RE h$, and  $V_\Theta(z)\in S^{-1,\alpha_1,\alpha_2}$.  Then operator $\ti\bA_\mu$ of $T_h$ is $\beta$-sectorial for any $\mu\in [-m_\infty(-0),\mu_*)$ with
\begin{equation}\label{e6-6-i}
 {\tan\beta=\tan\alpha_1+2\sqrt{\tan\alpha_1\,\tan\alpha_2}}.
\end{equation}
\end{theorem}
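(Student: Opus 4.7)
The plan is to imitate the argument of Theorem~\ref{t6-4}, transposed from the accretive/Stieltjes setting to the accumulative/inverse Stieltjes one: the parameter interval $(\mu_0,+\infty)$ is replaced by $[-m_\infty(-0),\mu_0)$ with $\mu_0=\RE h$, the $(*)$-extension $\bA$ is replaced by $\ti\bA$ of \eqref{e-27}, and the Stieltjes realization results used in \cite{ABT} are replaced by their inverse Stieltjes analogs from \cite{BT14}. Note that the premise ``accretive L-system'' should be read ``accumulative'' here, since the conclusion lives on the parameter range \eqref{250} where $V_\Theta\in S^{-1}$ and the relevant operator is $\ti\bA$.

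First, I would fix $\mu\in[-m_\infty(-0),\mu_0)$, read off $\tan\alpha_1(\mu)$ and $\tan\alpha_2(\mu)$ from \eqref{e10-133} and \eqref{e10-132}, and invoke the inverse Stieltjes analog \cite[Theorem~14]{BT14} of \eqref{e9-176}: since $V_\Theta\in S^{-1,\alpha_1(\mu),\alpha_2(\mu)}$, the operator $\ti\bA_\mu$ is $\varphi(\mu)$-sectorial with $\tan\varphi(\mu)$ given by the inverse analog of \eqref{e9-176}. The same elementary majorization used in the proof of Theorem~\ref{t6-4} then yields
$$\tan\varphi(\mu)\le f(\mu):=\tan\alpha_1(\mu)+2\sqrt{\tan\alpha_1(\mu)\,\tan\alpha_2(\mu)},$$
so $\ti\bA_\mu$ is $f(\mu)$-sectorial.

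Second, I would write $f(\mu)$ explicitly by substituting \eqref{e10-132}--\eqref{e10-133}, and verify via direct differentiation that $f$ is strictly increasing on $[-m_\infty(-0),\mu_0)$ with $\lim_{\mu\uparrow\mu_0}f(\mu)=+\infty$. The sign data powering this computation are that on this interval $\mu-\RE h<0$, $m_\infty(-0)+\mu\ge 0$, $m_\infty(-0)+\RE h>0$ (by the $\theta$-sectoriality of $T_h$), $\IM h>0$, and the denominator $(\mu-\RE h)(m_\infty(-0)+\RE h)-(\IM h)^2$ remains strictly negative throughout.

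Finally, setting $\tan\beta=f(\mu_*)$, monotonicity gives $f(\mu)\le f(\mu_*)=\tan\beta$ for every $\mu\in[-m_\infty(-0),\mu_*)$, whence
$$|\IM(\ti\bA_\mu f,f)|\le f(\mu)\,\RE(\ti\bA_\mu f,f)\le(\tan\beta)\,\RE(\ti\bA_\mu f,f),\qquad f\in\calH_+,$$
so $\ti\bA_\mu$ is $\beta$-sectorial. The main obstacle I expect is the monotonicity verification in Step~2: in the accretive case of Theorem~\ref{t6-4} the derivative was checked on $(\mu_0,+\infty)$ with $\mu-\RE h>0$, whereas here several factors flip sign, and the singularity at $\mu_0=\RE h$ (where $\tan\alpha_2\to+\infty$ and the radical term dominates) must be handled carefully so that strict monotonicity is preserved across the whole interval.
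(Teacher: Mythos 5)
Your proposal follows essentially the same route as the paper's own proof: invoke the inverse Stieltjes realization results of \cite{BT14} (Theorems 11 and 14) to get $\varphi$-sectoriality of $\ti\bA_\mu$, majorize the resulting angle by $f(\mu)=\tan\alpha_1(\mu)+2\sqrt{\tan\alpha_1(\mu)\tan\alpha_2(\mu)}$ built from the limits \eqref{e10-133}--\eqref{e10-132}, establish that $f$ is increasing on $[-m_\infty(-0),\mu_0)$ by the same sign/derivative analysis, and conclude via the chain $|\IM(\ti\bA_\mu f,f)|\le f(\mu)\RE(\ti\bA_\mu f,f)\le(\tan\beta)\RE(\ti\bA_\mu f,f)$. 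Your remark that the hypothesis should be read in the accumulative setting (so that \eqref{250} holds and $V_\Theta\in S^{-1,\alpha_1,\alpha_2}$) is consistent with what the paper implicitly assumes.
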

\begin{proof}
According to \cite[Theorem 11]{BT14} and \cite[Theorem 14]{BT14}, a $\varphi$-sectorial associated operator $\ti\bA$ of an L-system of the form \eqref{149} with the impedance function of the class $S^{-1,\alpha_1,\alpha_2}$ is also $\alpha$-sectorial with
\begin{equation*}
    \tan\alpha=\tan\alpha_2+2\sqrt{\tan\alpha_1(\tan\alpha_2-\tan\alpha_1)}.
\end{equation*}
But then, clearly
\begin{equation}
    \tan\alpha<\tan\beta=\tan\alpha_1+2\sqrt{\tan\alpha_1\,\tan\alpha_2},
\end{equation}
and hence this $\ti\bA$ is also $\beta$-sectorial.
\begin{figure}
  \begin{center}
  \includegraphics[width=70mm]{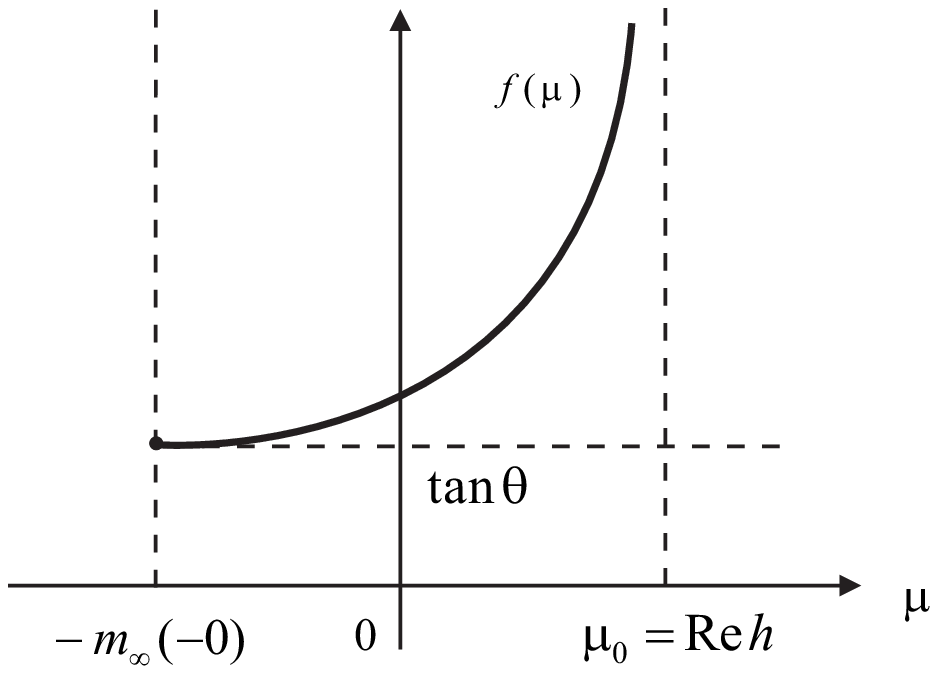}
  \caption{Function $f(\mu)$.}\label{fig-1-i}
  \end{center}
\end{figure}

Now suppose $\mu\in [-m_\infty(-0),\mu_*)$. Then it follows from Theorem \ref{t10-18-i} that the operator $\ti\bA$ associated with   L-system $\Theta$ of the form \eqref{149} is  $\varphi$-sectorial (with some angle $\varphi$) for any such $\mu$  in parametrization \eqref{e-27}. Using \eqref{e10-132} and \eqref{e10-133} on the impedance function $V_\Theta(z)$ of this L-system we can define a function
\begin{equation}\label{e6-10-i}
    \begin{aligned}
    f(\mu)=\tan\alpha_1&+2\sqrt{\tan\alpha_1\,\tan\alpha_2}=\frac{-(m_\infty(-0)+\mu)\IM h}
{\left(\mu-\RE h\right)(m_\infty(-0)+\RE h)-(\IM h)^2}\\
&+2\sqrt{\frac{\IM h}{\mu-\RE h}\cdot \frac{\left(m_\infty(-0)+\mu\right)\IM h}
{\left(\mu-\RE h\right)(m_\infty(-0)+\RE h)-(\IM h)^2}}.
    \end{aligned}
\end{equation}
Recall that $\IM h>0$ and \eqref{250}  implies $\mu<\RE h$ and $m_\infty(-0)+\mu\ge0$. Consequently, the first fraction in the right side of \eqref{e6-10} is positive.
Furthermore, direct check  reveals that the derivative of this fraction is also positive and hence it is an increasing function on $[-m_\infty(-0),\mu_*)$.
Also, since $\mu<\RE h$ and $\IM h>0$, the first fraction under the square root is negative and has a negative derivative. Taking the above into account and applying the product rule to the product under the square root we obtain that the derivative of the product is positive. Therefore, the entire second term in \eqref{e6-10-i} an increasing function on $[-m_\infty(-0),\mu_*)$.  Consequently,  $f(\mu)$ is an increasing function defined on  $[-m_\infty(-0),\mu_0)$ with the range $[\tan\theta,+\infty)$, where $\theta$ is the angle of sectoriality of the operator $T_h$ and $\tan\theta$ is given by \eqref{e10-45}. The graph of this functions is schematically  given on the Figure \ref{fig-1-i}.

\begin{figure}
  \begin{center}
  \includegraphics[width=70mm]{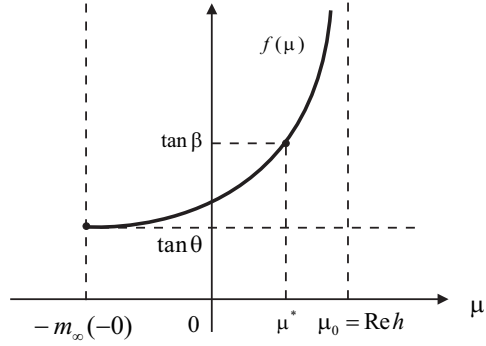}
  \caption{Angle of sectoriality $\beta$.}\label{fig-2-i}
  \end{center}
\end{figure}

Next we take the associated operator  $\ti\bA$ that is determined via \eqref{e-27} by the fixed value $\mu_*\in[-m_\infty(-0),\mu_0)$ from the premise of our theorem. According to our derivations above, this $\ti\bA$ is $\beta$-sectorial with $\beta$ given by \eqref{e6-6-i}. But then for every $\mu\in [-m_\infty(-0),\mu_0)$ the values of $f(\mu)$ are going to be smaller than $\tan \beta$ (see Figure \ref{fig-2-i}). Consequently, for an operator $\ti\bA_\mu$ that is parameterized by the value of $\mu\in [\mu_*,+\infty)$ the following obvious inequalities take place
$$
 |\IM(\ti\bA_\mu f,f)|\le f(\mu)\,\RE(\ti\bA_\mu f,f)\le (\tan\beta)\,\RE(\ti\bA_\mu f,f),\qquad f\in\calH_+.
$$
Hence,  any  associated operator $\ti\bA_\mu$ parameterized by a $\mu\in [\mu_*,+\infty)$  is $\beta$-sectorial.
\end{proof}
Note that Theorem \ref{t6-4-i} provides us with a value $\beta$ which serves as a universal angle of sectoriality for the entire family of associated operators $\ti\bA$ of the form \eqref{e-27}.


We conclude this paper with a couple of simple illustrations.
\subsection*{Example 1}
 Consider  a function
\begin{equation}\label{e-ex1}
    V(z)=1+\frac{i}{\sqrt z}.
\end{equation}
By direct check (see also \cite{ABT}) one can see that $V(z)$ is a Stieltjes function of the sectorial class  $S^{\frac{\pi}{4},\frac{\pi}{2}}$. Setting the values of parameters $h=\frac{1}{2}(1+i)$ and $\mu=1$ into \eqref{1501}
and taking into account that $m_\infty(z)=-i\sqrt{z}$ (see \cite{ABT}), we see that $V_\Theta(z)$ in \eqref{1501} matches $V(z)$ in \eqref{e-ex1}. Thus, this set of parameters corresponds to the L-system $\Theta$ whose impedance function is $V(z)$.
Applying   \eqref{137}  yields
\begin{equation}\label{e-ex4}
    \begin{aligned}
    \bA\, y&=-y''-\frac{1}{1-i}[2y'(0)-(1+i)y(0)](\delta(x)+\delta'(x)),\\
    \bA^*\, y&=-y''-\frac{1}{1+i}[2y'(0)-(1-i)y(0)](\delta(x)+\delta'(x)).
    \end{aligned}
\end{equation}
The operator $T_h$ in this case is
$$
\left\{ \begin{array}{l}
 T_h y=-y^{\prime\prime} \\
 2y'(0)=(1+i)y(0). \\
 \end{array} \right.
$$
 The channel vector $g$ of the
form \eqref{146} then equals
$g=\delta(x)+\delta'(x)$, satisfying
$$
\IM\bA=\frac{\bA - \bA^*}{2i}=KK^*=(.,g)g,
$$
 and  operator $Kc=cg$, ($c\in\dC$) with $K^*y=(y,g)=y(0)-y'(0)$.
The real part
$$
\RE\bA\, y=-y''-y'(0)(\delta(x)+\delta'(x))
$$
contains the self-adjoint quasi-kernel
$$
\left\{ \begin{array}{l}
 \widehat A y=-y^{\prime\prime} \\
 y'(0)=0. \\
 \end{array} \right.
$$
 An L-system   with Schr\"odinger operator of the form \eqref{e6-3-2} that realizes $V(z)$ can now be written as
\begin{equation}\label{e-51}
\Theta= \begin{pmatrix} \bA&K&1\cr \calH_+ \subset L_2[a,+\infty)
\subset \calH_-& &\dC\cr \end{pmatrix}.
\end{equation}
where $\bA$ and $K$ are defined above. By direct calculations we obtain that
$$
(\RE \bA y,y)=\int_0^\infty |y'(x)|dx+|y'(0)|^2\ge0,
$$
and hence $\bA$ is accretive. Applying Theorem \ref{t10-18} and \eqref{e10-134} for $h=\frac{1}{2}(1+i)$ and $\mu=1$ we get that $\bA$ is not $\alpha$-sectorial for any $\alpha$ even though $T_h$ is $\alpha$-sectorial for  $\alpha=\pi/4$.

\subsection*{Example 2}
Consider  a function
\begin{equation}\label{e-ex2-i}
    V(z)=-\frac{\sqrt z}{\sqrt z+2i}.
\end{equation}
By direct check (see also \cite{BT14}) $V(z)$ is an inverse Stieltjes function of  the class $S^{-1,0,\pi/4}$. Setting the values of parameters $h=1+i$ and $\mu=0$ into \eqref{1501} and taking into account that $m_\infty(z)=-i\sqrt{z}$,
we see that $V_\Theta(z)$ in \eqref{1501} matches $V(z)$ in \eqref{e-ex2-i}. Thus, this set of parameters corresponds to the L-system $\Theta$ whose impedance function is $V(z)$.
Now we  assemble an L-system $\Theta$ of the form \eqref{e-51} with this set of parameters. We have
\begin{equation}\label{e-86}
\left\{ \begin{array}{l}
 T_h y=-y^{\prime\prime}, \\
 y'(0)=(1+i)y(0). \\
 \end{array} \right.
\end{equation}
It was discussed in \cite{BT14} that $T$ of the form \eqref{e-86} is $\alpha$-sectorial with the exact angle $\alpha=\pi/4$. Furthermore, the state-space operator is
\begin{equation}\label{e-ex4-bis}
    \begin{aligned}
    \bA\, y&=-y''+\frac{1}{1+i}[y'(0)-(1+i)y(0)]\delta'(x),\\
    \bA^*\, y&=-y''+\frac{1}{1-i}[y'(0)-(1-i)y(0)]\delta'(x),
    \end{aligned}
\end{equation}
where $\IM\bA=KK^*$ and  $Kc=c\cdot g$ with  $g=\frac{1}{\sqrt2} \delta'(x)$, $c\in\dC$. The associated operator $\ti\bA$ of the form \eqref{e-14} (see also \eqref{e-27}) is
$$
\ti\bA y=-y''-y'(0)\delta(x)-y(0)\delta'(x)+\frac{1}{1+i}[y'(0)-(1+i)y(0)]\delta'(x).
$$
By direct calculations we obtain that
$$
(\RE \ti\bA y,y)=\|y'(x)\|^2_{L^2}+\frac{1}{2}|y'(0)|^2,\quad (\IM \ti\bA y,y)=-\frac{1}{2}|y'(0)|^2,
$$
and hence
$
(\RE \ti\bA y,y)\ge |(\IM \ti\bA y,y)|.
$
Thus, $\ti\bA$ is $\alpha$-sectorial with $\alpha=\pi/4$. According to \cite[Theorem 13]{BT14} this angle of sectoriality is exact. Consequently, we have shown that  the $\alpha$-sectorial sesquilinear  form $(y,T_h y)$ defined on a subspace $\dom(T_h)$ of $\calH_+$ can be extended to the $\alpha$-sectorial form $(\ti\bA y,y)$ defined on $\calH_+$ having the exact (for both forms) angle of sectoriality $\alpha=\pi/4$. A general problem of extending sectorial sesquilinear forms to sectorial ones was mentioned by T.~Kato in \cite{Ka}.

\bibliographystyle{amsalpha}

\end{document}